\theoremstyle{plain}
\newtheorem{theorem}{Theorem}[section]
\newtheorem{proposition}[theorem]{Proposition}
\newtheorem{lemma}[theorem]{Lemma}
\newtheorem{conjecture}[theorem]{Conjecture}
\theoremstyle{definition}
\newtheorem{definition}[theorem]{Definition}
\newtheorem{example}[theorem]{Example}
\newcommand{\norm}[1]{\left\lVert#1\right\rVert}
\newcommand{\mathset}[2]{\left\{#1\middle\vert #2 \right\}}
\newcommand{\mathseq}[2]{\left(#1\right)_{#2}}
\newcommand{\card}[1]{{\# #1}}
\newcommand{\N}{\mathbb{N}}
\newcommand{\Z}{\mathbb{Z}}
\newcommand{\R}{\mathbb{R}}
\newcommand{\Q}{\mathbb{Q}}
\newcommand{\leb}{\mathbb{\lambda}}
\title{Weak Poissonian box correlations of higher order}
\date{\today}
\author{Jasmin Fiedler and Christian Weiß}
\address{{\bf{Ruhr West University of Applied Sciences,}}\\ {{Department of Natural Sciences, Duisburger Str. 100,}}\\{{45479 M\"ulheim an der Ruhr, Germany}}}
\email{jasmin.fiedler@hs-ruhrwest.de, christian.weiss@hs-ruhrwest.de}
\begin{document}

\maketitle
\begin{abstract}
    Poissonian pair correlations have sparked interest within the mathematical community, because of their number theoretic properties, and their connections to quantum physics and probability theory, in particular uniformly distributed random numbers. Rather recently, several generalizations of the concept have been introduced, including weak Poissonian pair correlations and $k$-th order Poissonian correlations. In this paper, we propose a new generalized concept called $(k,\beta)$-Poissonian box correlations. We study its properties and more specifically its relation to uniform distribution theory and discrepancy, random numbers and gap distributions.
\end{abstract}

\section{Introduction and main results}\label{sec:intro}

It is a practically relevant question how it is possible to find out if a given sequence $x_1, \ldots, x_n$ is (supposedly) a realization of a uniformly distributed random variable or a deterministically generated sequence, see e.g. the very well written article \cite{ES07} and references therein. While statistical tests are often applied in practice, there are also theoretical properties which a generic random sequence drawn independently from a (one-dimensional) uniform distribution satisfies.  Recall that the star-discrepancy of a finite sequence $x=(x_n)_{n=0}^N \in [0,1)$ with $N \in \mathbb{N}$ is defined by
$$D_N^*(x) := \sup_{[0,b) \subseteq [0,1)} \left| \frac{1}{N}\card{\mathset{x_i\in [0,b) }{ 1 \leq i \leq N }}
- \leb([0,b)) \right|,$$
where $\leb(\cdot)$ denotes Lebesgue measure. A famous classical property of independent uniformly distributed random variables is that the convergence of the star-discrepancy is of order $\mathcal{O}(N^{-1/2})$, see \cite{Nie92}. For (deterministic) sequences of real numbers it turns out often that their order of convergence for the discrepancy is determined by number theoretic properties of their underlying definition, e.g. van der Corput or Kronecker sequences, see \cite{KN74}. More recently, another generic property of random uniformly distributed sequences has attracted serious attention, see for example \cite{aistleitner:pair_corr_equidist}, \cite{grepstad:on_pair_corr_discr}, \cite{larcher:on_pair_corr_seq} and \cite{baker:equidist_seq_polynom}: a sequence $\mathseq{x_n}{n\in\N}$ is said to have Poissonian pair correlations if 
\begin{align} \label{eq:def:poissonian}
\lim_{N \to \infty} \frac{1}{N} \# \mathset{\substack{1\leq i,j\leq N\\i\neq j}}{\Vert x_i-x_j \Vert\leq \frac{s}{N}} = 2s
\end{align}
for all $s > 0$, where {$\norm{x} = \min \{x -\lfloor x \rfloor, 1- (x -\lfloor x \rfloor) \}$ represents the distance to the nearest integer. The concept of Poissonian pair correlations originated in quantum mechanics, but has been popularized in the mathematical community by Rudnick and Sarnak in \cite{rudnick:pair_corr_funct}.\\[12pt]
The property of a sequence $(x_n)_{n \in \mathbb{N}}$ to have Poissonian pair correlations is equivalent to the convergence
\begin{align} \label{eq:poissonian:functional}
    \lim_{N\to\infty}\frac{1}{N}\sum_{\substack{1\leq i,j\leq N\\i\neq j}}f(N((x_i-x_j)))=\int_{\R}f(x)\mathrm{d}x
\end{align}
for every $f\in C_C(\R)$, where $((x))$ is the signed distance to the nearest integer and $C_C(\R)$ stands for the set of all continuous real functions with compact support.\\[12pt]
Poissonian pair correlations are strongly connected to uniform distribution, which means for a sequence $\mathseq{x_n}{n\in\N}$ of numbers in the unit interval $[0,1)$ that
\[
\lim_{N\to\infty}\frac{1}{N}\card{\mathset{x_i\in [a,b)] }{ 1 \leq i \leq N }}=\leb([a,b))=b-a
\]
for all $0\leq a<b\leq 1$. The following theorem has been independently proven by Aistleitner, Lachmann and Pausinger in \cite{aistleitner:pair_corr_equidist} and by Grepstad and Larcher in \cite{grepstad:on_pair_corr_discr} and several more proofs have been found since then.
\begin{theorem} \label{thm:ALP:GL}
    Let $\mathseq{x_n}{n\in\N} \subset [0,1)$ be an arbitrary sequence. If the sequence has Poissonian pair correlations, then it is uniformly distributed.
\end{theorem}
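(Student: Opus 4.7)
The plan is to argue by contradiction: assume $(x_n)\subset [0,1)$ has Poissonian pair correlations but is not uniformly distributed, and derive a violation of the PPC condition via a convexity argument on close pairs. By weak compactness of probability measures on $[0,1]$, non-uniformity yields a subsequence $(N_k)$ along which the empirical measures $\frac{1}{N_k}\sum_{n=1}^{N_k}\delta_{x_n}$ converge weakly to some $\mu\neq\leb$. Atoms of $\mu$ are ruled out immediately: a point mass of size $c>0$ at some $y$ would force, for any fixed $s>0$, at least $\sim (cN_k)^2$ ordered pairs with $\Vert x_i-x_j\Vert\le s/N_k$ once $k$ is large, blowing up the PPC count $2sN_k$. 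Hence $\mu$ is non-atomic but different from Lebesgue measure, so there exists an interval $I=[a,b)$ with $\mu(I)=\alpha\neq\ell:=b-a$; exchanging $I$ with its complement if needed, assume $\alpha>\ell$.

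For the core step, fix $s>0$, partition $I$ into $L\approx\ell N_k/s$ consecutive subintervals of length $\approx s/N_k$, and let $n_j$ count the $x_n$ with $n\le N_k$ in the $j$-th piece. Since $\sum_j n_j=(\alpha+o(1))N_k$, convexity yields
\[
\sum_j n_j^2 \ \ge\ \frac{(\sum_j n_j)^2}{L}\ \ge\ \frac{\alpha^2 s}{\ell}\,N_k\bigl(1+o(1)\bigr),
\]
so the number of ordered pairs $(i,j)$ with $i\neq j$ and $x_i,x_j$ in a common subinterval of $I$ is at least $\frac{\alpha^2 s}{\ell}N_k-\alpha N_k+o(N_k)$, and every such pair satisfies $\Vert x_i-x_j\Vert\le s/N_k$. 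The same argument applied to $I^c$ gives an analogous bound with $\alpha,\ell$ replaced by $1-\alpha,1-\ell$. To transfer these combinatorial bounds to the PPC hypothesis, I would then average over shifts $t\in[0,s/N_k)$ of the subdivision: the identity that the shift-average of the same-subinterval pair count $\sum_j n_j(n_j-1)$ equals $\sum_{i\neq j}f_s(N_k\Vert x_i-x_j\Vert)$ for the triangle $f_s(u):=(1-|u|/s)_+$ converts the bound into one on this test-function sum.

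Adding the $I$ and $I^c$ estimates (mixed pairs only contribute nonnegatively and are dropped), dividing by $N_k$, and applying the equivalent functional form \eqref{eq:poissonian:functional} of PPC with $f_s$, the left-hand side tends to $\int f_s=s$, yielding
\[
s\ \ge\ s\left(\frac{\alpha^2}{\ell}+\frac{(1-\alpha)^2}{1-\ell}\right)-1.
\]
Letting $s\to\infty$ gives $\alpha^2/\ell+(1-\alpha)^2/(1-\ell)\le 1$. However, the elementary inequality $a^2/p+b^2/q\ge (a+b)^2/(p+q)$ applied with $(a,b,p,q)=(\alpha,1-\alpha,\ell,1-\ell)$ gives the reverse inequality, with equality iff $\alpha=\ell$, contradicting $\alpha\neq\ell$. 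I expect the main technical obstacle to be the shift-averaging step: one must carefully verify that subintervals straddling $\partial I$ contribute only $o(N_k)$, and that the identification of the shift-average with $\sum_{i\neq j}f_s(N_k\Vert x_i-x_j\Vert)$ persists when one restricts to pairs with both endpoints in $I$.
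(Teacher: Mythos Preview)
The paper does not contain its own proof of Theorem~\ref{thm:ALP:GL}; it is quoted as a known result, with references to Aistleitner--Lachmann--Pausinger \cite{aistleitner:pair_corr_equidist} and Grepstad--Larcher \cite{grepstad:on_pair_corr_discr}, so there is nothing in the paper to compare your argument against directly.

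That said, your sketch is essentially the Aistleitner--Lachmann--Pausinger strategy: pass to a weak subsequential limit $\mu\neq\leb$ of the empirical measures, fix an interval $I$ with $\mu(I)=\alpha\neq\ell:=\leb(I)$ and $\mu(\partial I)=0$, and use Cauchy--Schwarz on local box counts to force the pair correlation statistic above its Poissonian value. The decisive refinement is the shift-averaging step that replaces the indicator $\mathds{1}_{[-s,s]}$ by the triangle $f_s(u)=(1-|u|/s)_+$ with $\int_\R f_s=s$ rather than $2s$; without it the crude bound only yields $\alpha^2/\ell+(1-\alpha)^2/(1-\ell)\le 2$, which is no contradiction, whereas with it you get $\le 1$, and strict convexity gives $>1$ for $\alpha\neq\ell$.

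Two comments on the points you flag as obstacles. First, you need not restrict the shift-averaging identity to pairs lying in $I$: it is cleaner to partition the whole circle, apply Cauchy--Schwarz separately to the pieces contained in $I$ and those contained in $I^c$ (this pointwise lower bound holds for every shift), and only then shift-average the full quantity $\sum_j n_j(n_j-1)$, which exactly equals $\sum_{i\neq j}f_s(N_k\Vert x_i-x_j\Vert)$. Second, the at most two pieces straddling $\partial I$ can simply be discarded in the Cauchy--Schwarz step; since $\mu(\{a\})=\mu(\{b\})=0$, the number of points they carry is $o(N_k)$, uniformly in the shift. Finally, the separate exclusion of atoms is unnecessary: whenever $\mu\neq\leb$ one can choose $a,b$ avoiding the (countably many) atoms with $\mu([a,b))\neq b-a$, so the convexity argument already handles the atomic case.
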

Poissonian pair correlations have been generalized in many regards, including higher dimensions, see \cite{NP07}, \cite{hinrichs:mult_dim_poiss_pair}, \cite{bera:on_high_dim_poiss_pair_corr}, \cite{Ste20}, the p-adic integers, see \cite{Wei23}, weak versions, see \cite{NP07} and \cite{hauke:weak_poiss_corr},  and to higher orders, see \cite{cohen:poiss_pair_higher_diff}, \cite{hauke:poiss_corr_higher_order}. In the ultimately mentioned paper, so-called $k$-th order Poissonian correlations have been introduced. According to the definition therein, a sequence $\mathseq{x_n}{n\in\N}$ in $[0,1)$ has $k$-th order Poissonian correlations for $k\geq 2$ if 
    \begin{equation}\label{eq:def_poiss_k_corr}
       \lim_{N\to\infty}\frac{1}{N}\sum_{\substack{1\leq i_1,\ldots,i_k\leq N\\\text{distinct}}}f(N((x_{i_1}-x_{i_k})),\ldots,N((x_{i_{k-1}}-x_{i_k})))=\int_{\R^{k-1}}f(x)\mathrm{d}x 
    \end{equation}
for every $f\in C_C(\R^{k-1})$. Thus, the generalization of Hauke and Zafeiropoulos is based on the functional property \eqref{eq:poissonian:functional}.
In \cite{hauke:poiss_corr_higher_order}, the authors showed the following generalization of Theorem~\ref{thm:ALP:GL}.
\begin{theorem}[{\cite[Theorem 1.1]{hauke:poiss_corr_higher_order}}]
    If a sequence $\mathseq{x_n}{n\in\N}$ of numbers in $[0,1)$ has Poissonian $k$–correlations for some $k\geq 2$, then it is uniformly distributed.
\end{theorem}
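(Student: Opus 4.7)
\medskip
\noindent\textbf{Strategy.} My plan is a proof by contradiction via empirical measures; notably, the argument will apply uniformly for all $k \ge 2$ and in particular recover Theorem~\ref{thm:ALP:GL} as the case $k=2$. Suppose $(x_n)$ is not uniformly distributed. By weak compactness of probability measures on $[0,1]$, some subsequence of $\mu_N := \frac{1}{N}\sum_{n\le N}\delta_{x_n}$ converges weakly to a probability measure $\mu \ne \lambda$, where $\lambda$ denotes Lebesgue measure on $[0,1]$. I will extract a contradiction by testing the $k$-correlation hypothesis against a suitable product function and invoking Jensen's inequality.

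\medskip
\noindent\textbf{Test function and microscopic picture.} Fix a non-negative $\phi \in C_C(\R)$ with $\int_\R \phi\,\mathrm{d}x = 1$ and apply \eqref{eq:def_poiss_k_corr} to $F = \phi^{\otimes(k-1)} \in C_C(\R^{k-1})$, yielding
\[
\frac{1}{N}\sum_{\substack{1\le i_1,\ldots,i_k\le N\\\text{distinct}}} \prod_{j=1}^{k-1}\phi\bigl(N((x_{i_j}-x_{i_k}))\bigr) \;\longrightarrow\; \Bigl(\int_\R \phi\,\mathrm{d}x\Bigr)^{k-1} = 1.
\]
Writing the left-hand side as $\frac{1}{N}\sum_{i_k} D_N(i_k)$, the quantity $D_N(i_k)$ is the $(k-1)$-th factorial moment of the point count around $x_{i_k}$ weighted by $\phi$ at scale $1/N$. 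Decomposing $\mu = \rho\,\mathrm{d}\lambda + \mu_s$ with $\rho \ge 0$ and $\mu_s$ singular, the expected microscopic behavior is $D_N(i_k) \to \rho(x_{i_k})^{k-1}$ for $x_{i_k}$ in the absolutely continuous support of $\mu$, while $D_N(i_k)$ blows up at points in the support of $\mu_s$ (reflecting local clustering).

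\medskip
\noindent\textbf{Jensen and the main obstacle.} If $\mu_s \ne 0$, the blow-up of $D_N$ on a set of positive $\mu$-mass would drive $\frac{1}{N}\sum_{i_k} D_N(i_k) \to \infty$, contradicting the value $1$. Hence $\mu = \rho\,\mathrm{d}\lambda$ must be absolutely continuous with $\int \rho\,\mathrm{d}\lambda = 1$, and passing to the limit gives $\int_{[0,1]} \rho(y)^k\,\mathrm{d}\lambda(y) = 1$. By Jensen's inequality (convexity of $t \mapsto t^k$ for $k \ge 2$) applied with probability measure $\lambda$ on $[0,1]$, one has $\int \rho^k\,\mathrm{d}\lambda \ge \bigl(\int \rho\,\mathrm{d}\lambda\bigr)^k = 1$, with strict inequality unless $\rho \equiv 1$; thus $\rho \equiv 1$, so $\mu = \lambda$, contradicting $\mu \ne \lambda$. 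The principal technical obstacle is the microscopic-macroscopic bridge: weak convergence of $\mu_N$ does not by itself yield pointwise or integrated control of $D_N(i_k)$ at the scale $1/N$ actually probed by the hypothesis. Resolving this will likely require a careful combination of inclusion-exclusion to isolate the factorial-moment structure of the distinct-index sums, Fatou-type lower-semicontinuity for passage to the limit, and a direct local-clumping estimate to rule out any singular part of $\mu$. This bridge step is where I expect the bulk of the technical work to lie.
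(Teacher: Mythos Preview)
This theorem is quoted from \cite{hauke:poiss_corr_higher_order} and is not proved in the present paper, so there is no in-paper argument to compare your attempt against. I can therefore only assess the proposal on its own terms.

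What you have written is not a proof but a strategy with a self-identified gap, and that gap is essentially the whole theorem. Poissonian $k$-correlations probe the sequence at scale $1/N$, whereas weak convergence of the empirical measures $\mu_N$ only controls behaviour at fixed macroscopic scales; there is no mechanism by which $\mu_N \to \mu$ weakly forces $\frac{1}{N}\sum_{i_k} D_N(i_k) \to \int \rho^k\,\mathrm{d}\lambda$. One can have $\mu_N \to \lambda$ weakly while the correlation sums diverge or converge to a wrong value, and conversely microscopic clustering at scale $1/N$ is invisible to weak limits. Your heuristic $D_N(i_k) \to \rho(x_{i_k})^{k-1}$ presupposes local regularity that weak convergence does not supply, and neither Fatou nor inclusion--exclusion manufactures the missing link. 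The arguments in the literature avoid this by working at an \emph{intermediate} scale: partition $[0,1)$ into roughly $N/L$ arcs of length $L/N$ for a large but fixed $L$, let $a_j$ count the points in the $j$-th arc, and observe that $k$-tuples of distinct points lying in a common arc contribute to $R_{k,1}(L,\ldots,L,N)$, giving a lower bound of the shape $R_{k,1}(L,\ldots,L,N) \gtrsim \frac{1}{N}\sum_j a_j(a_j-1)\cdots(a_j-k+1)$. Jensen is then applied to the finite sum $\sum_j a_j^k$ under the constraint $\sum_j a_j = N$, and near-equality forces the $a_j$ to be nearly equal, hence equidistribution at scale $L/N$; one sends $L\to\infty$ after $N\to\infty$. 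Your Jensen endgame is morally correct, but it must be run on discrete box counts at the intermediate scale $L/N$, not on a continuum density reached through weak limits---and that reformulation is where the substance lies.
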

Also the opposite is generically true. This is relatively straightforward to show if $k=2$, see \cite{Mar07}, while more sophisticated techniques are required for higher orders.
\begin{theorem}[{\cite[Appendix B]{hauke:poiss_corr_higher_order}}]
    Let $\mathseq{X_n}{n\in\N}$ be a sequence of independent random variables, drawn from uniform distribution on $[0,1)$. Then the sequence almost surely has Poissonian $k$–correlations for all $k\geq 2$.
\end{theorem}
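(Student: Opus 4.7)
The plan is to fix $f \in C_C(\R^{k-1})$ and analyze the random variable
\[
T_N(f) := \frac{1}{N}\sum_{\substack{1 \leq i_1,\ldots,i_k \leq N \\ \text{distinct}}} f\bigl(N((X_{i_1}-X_{i_k})),\ldots,N((X_{i_{k-1}}-X_{i_k}))\bigr)
\]
by a moment method, showing first that $\Exp{T_N(f)} \to \int_{\R^{k-1}} f(u)\diff u$ and then that its fluctuations around the mean are small enough to apply Borel--Cantelli; a density argument will finally handle all $f \in C_C(\R^{k-1})$ and all $k \geq 2$ simultaneously. The mean is straightforward: since the $X_i$ are i.i.d.\ $\uniform[0,1)$, translation invariance of the uniform distribution on $\R/\Z$ implies that, conditional on $X_{i_k}$, the differences $((X_{i_j}-X_{i_k}))$ for $j = 1, \ldots, k-1$ are i.i.d.\ uniform on $[-1/2, 1/2)$. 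A change of variables $u_j = N y_j$ then gives, for all $N$ larger than a constant depending only on $\operatorname{supp}(f)$,
\[
\Exp{T_N(f)} \;=\; \frac{N(N-1)\cdots(N-k+1)}{N^k}\int_{\R^{k-1}} f(u)\diff u \;\xrightarrow[N\to\infty]{}\; \int_{\R^{k-1}} f(u)\diff u.
\]

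The main technical step, and the principal obstacle, is a variance bound $\var(T_N(f)) = O(N^{-1})$, or for greater convenience a fourth-moment estimate $\Exp{(T_N(f)-\Exp{T_N(f)})^4} = O(N^{-2})$. To obtain the former I would expand $\Exp{T_N(f)^2}$ as a double sum over ordered $k$-tuples $I, J$ and group the pairs according to $r := |I \cap J| \in \{0, 1, \ldots, k\}$. The $r = 0$ contribution, by independence, matches $\Exp{T_N(f)}^2$ up to a vanishing $O(N^{-1})$ correction and hence contributes only a lower-order term to the variance. For each $r \geq 1$ there are $O(N^{2k-r})$ ordered pairs of this overlap type, and a conditioning-and-rescaling argument parallel to the mean computation shows that the expected product $\Exp{\lvert f(NY_I)\,f(NY_J)\rvert}$ is of order $N^{-(2k-r-1)}$, because the joint support constraint forces the underlying $X$-variables into a set whose volume is controlled by the rank of the relevant $2(k-1)$ linear difference forms. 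Summing, each $r \geq 1$ contributes $O(N^{-1})$ to $\var(T_N(f))$. The combinatorial bookkeeping here is non-trivial: even for $k = 2$ three distinct overlap patterns must be inspected, and for general $k$ their number grows rapidly. The delicate situations are those in which the shared indices include the pivot index $i_k$ of one tuple but not of the other, since then the two groups of differences do not decouple cleanly and one must track the rank of the constraint system with care; this is precisely where the argument has to go beyond the $k = 2$ calculation of \cite{Mar07}. The analogous bookkeeping for the fourth moment is longer but follows the same template.

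With a moment bound in hand, Chebyshev (in the variance version) or Markov applied to the fourth moment together with Borel--Cantelli yields $T_N(f) \to \int f$ almost surely for each fixed $f$. In the variance version one first obtains convergence along the polynomial subsequence $N_\ell = \ell^2$ and then fills the gaps via a standard sandwich between $T_{N_\ell}$ and $T_{N_{\ell+1}}$ evaluated at slightly enlarged and contracted nonnegative envelopes of $f$, exploiting $N_{\ell+1}/N_\ell \to 1$; the fourth-moment version yields $\sum_N \Prob{|T_N(f) - \Exp{T_N(f)}| > \epsilon} < \infty$ directly. Since $C_C(\R^{k-1})$ is separable, the convergence holds almost surely simultaneously for every member of a fixed countable dense family, and a uniform approximation argument combined with the almost sure boundedness of $T_N(h)$ for a single nonnegative continuous envelope $h$ transfers the limit to every $f \in C_C(\R^{k-1})$. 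Intersecting the resulting almost sure events over $k \in \{2, 3, 4, \ldots\}$ gives the desired conclusion that $\mathseq{X_n}{n \in \N}$ almost surely has Poissonian $k$-correlations for every $k \geq 2$.
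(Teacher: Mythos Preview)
The paper does not give its own proof of this statement; it is quoted without proof from \cite[Appendix B]{hauke:poiss_corr_higher_order} as background, so there is nothing in the present paper to compare your argument against.

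That said, your outline is the standard moment-method proof and is essentially correct. The mean computation is right; for the variance, your claim that $\Exp{|f(NY_I)f(NY_J)|}=O(N^{-(2k-r-1)})$ whenever $r=|I\cap J|\ge1$ is true, but the reason deserves to be spelled out rather than deferred to ``tracking the rank with care'': each tuple's $k-1$ difference constraints form a star on its index set centred at the pivot, and any shared index (whether or not it is a pivot) connects the two stars, so for $r\ge1$ the combined constraint graph on the $2k-r$ indices is connected and the $2(k-1)$ difference forms span the full $(2k-r-1)$-dimensional zero-sum space. With this in hand the $r\ge1$ contributions to $\var(T_N(f))$ are each $O(N^{-1})$, and the analogous connectivity argument gives the fourth-moment bound $O(N^{-2})$ you mention, after which Borel--Cantelli and the separability argument finish the proof exactly as you describe.
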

In light of the original definition of Poissonian pair correlations, one might ask whether the approach from \cite{hauke:poiss_corr_higher_order} is equivalent to the equally natural generalization
    \begin{equation}\label{eq:def_k_beta_poiss_box_corr}
      \lim_{N\to\infty}\frac{1}{N}\# \mathset{\substack{1\leq i_1,\ldots,i_k\leq N\\\text{distinct}}}{\Vert x_{i_1}-x_{i_k} \Vert\leq \frac{s_1}{N},\ldots,\Vert x_{i_{k-1}}-x_{i_k} \Vert\leq \frac{s_{k-1}}{N}} = \prod_{l=1}^{k-1}2s_l  
    \end{equation}
for all $s_1,\ldots,s_{k-1}>0$, which is closer to~\eqref{eq:def:poissonian}. It is easy to show that $k$-th order Poissonian correlations in the sense of Hauke and Zafeiropoulos imply \eqref{eq:def_k_beta_poiss_box_corr} (the proof is essentially that of Proposition~\ref{prop:equiv_def_poiss_corr}), but it is not clear whether or not the two notions are equivalent. In \cite[Appendix A]{hauke:poiss_corr_higher_order} an heuristic argument is given, why the notions might differ. The authors state that their symmetry argument for the case $k=2$ does not suffice to show equivalence of the two notions in general and stress the difficulty of the question by hat. Also, we were not able to find a rigorous proof for this claim and therefore leave it here as an open question. No matter if the definitions differ or not, we think that both possible equally natural generalizations deserve further investigation. In order to distinguish the definition based on \eqref{eq:def_k_beta_poiss_box_corr} from the one stemming from \cite{hauke:poiss_corr_higher_order}, we will speak of $k$-th order Poissonian box correlations for sequences satisfying \eqref{eq:def_k_beta_poiss_box_corr}, and use the same linguistic principal for similar definitions to follow.\\[12pt]
More specifically, we introduce $(k,\beta)$-Poissonian box correlations in this paper.
Let $(x_n)_{n \in \mathbb{N}}$ be a sequence in $[0,1)$. For $\beta\in(0,1]$, $k\in\mathbb{N}$ and $s_1,\ldots,s_{k-1}>0$ define
\[
R_{k,\beta}(s_1,\ldots,s_{k-1},N)=\frac{\card{\mathset{\substack{1\leq i_1,\ldots,i_k\leq N\\\text{distinct}}}{\Vert x_{i_1}-x_{i_k} \Vert\leq \frac{s_1}{N^\beta},\ldots,\Vert x_{i_{k-1}}-x_{i_k} \Vert\leq \frac{s_{k-1}}{N^\beta}}}}{N^{k-(k-1)\beta}}.
\]
We say that a sequence $\mathseq{x_n}{n\in\N}$ has $(k,\beta)$-Poissonian box correlations if
\[
\lim_{N\to \infty}R_{k,\beta}(s_1,\ldots,s_{k-1},N)=\prod_{l=1}^{k-1}2s_l.
\]
for all $s_1,\ldots,s_{k-1}>0$. Moreover, $(k,\beta)$-Poissonian correlations are defined analogously to \eqref{eq:def_poiss_k_corr}, meaning that
\[
\lim_{N\to\infty}R_{k,\beta}(f,N)=\int_{\R^{k-1}}f(x)\mathrm{d}x 
\]
for every $f\in C_C(\R^{k-1})$ where
\[
R_{k,\beta}(f,N):=\frac{1}{N^{k-(k-1)\beta}}\sum_{\substack{1\leq i_1,\ldots,i_k\leq N\\\text{distinct}}}f(N^\beta((x_{i_1}-x_{i_k})),\ldots,N^\beta((x_{i_{k-1}}-x_{i_k}))).
\]
In Section \ref{sec:kmb_poiss_corr} we give an alternative characterization of $(k,\beta)$-Poissonian box correlations which relies on an integral over the functions
\begin{align}
            G_\beta(s_1,\ldots,s_{k-1},&t_1,\ldots,t_{k-1},N):=\nonumber\\&\frac{\card{\mathset{\substack{1\leq i_1,\ldots,i_{k-1}\leq N\\\mathrm{distinct}}}{\Vert x_{i_1}-t_1\Vert\leq \frac{s_1}{2N^\beta},\ldots,\Vert x_{i_{k-1}}-t_{k-1}\Vert\leq \frac{s_{k-1}}{2N^\beta}}}}{N^{k-1-(k-1)\beta}}.\label{eq:def_G_beta}
\end{align}
        and
\begin{align}
            H_\beta(s_1,\ldots,s_{k-1},&t_1,\ldots,t_{k-1},N):=\nonumber\\&\frac{\card{\mathset{1\leq i\leq N}{\Vert x_i-t_1\Vert\leq \frac{s_1}{2N^\beta},\ldots,\Vert x_i-t_{k-1}\Vert\leq \frac{s_{k-1}}{2N^\beta}}}}{N^{1-(k-1)\beta}}\label{eq:def_H_beta}
\end{align}
where $s_1,\ldots, s_{k-1}>0$ and $t_1,\ldots,t_{k-1}\in\R$. 
Note that the functions $G_\beta$ and $H_\beta$ agree for $k=2$ but differ for $k>3$. This implies among others that $(k,\beta)$-Poissonian box correlations are significantly different in the case $k \geq 3$ than for $k=2$ because the combinatorial behavior of the points in the sequence contribute in two different ways to the property of having $(k,\beta)$-Poissonian box correlations which is captured by the function $G_\beta$ and $H_\beta$. This is another reason, why we think that $(k,\beta)$-Poissonian box correlations are worth to be discovered. More precisely, the following holds.
\begin{theorem}\label{thm:poiss_corr_eq_def_fun_G_H}
    Let $\mathseq{x_n}{n\in\N}$ be a sequence in $[0,1)$. Let $\beta\in(0,1]$ and $k\geq 2$. Then the following are equivalent:
    \begin{enumerate}
        \item[(i)] The sequence has $(k,\beta)$-Poissonian box correlations.
        \item[(ii)] For any $s_1,\ldots,s_{k-1}>0$ we have
        \begin{align*}
            \lim_{N\to\infty}\int_0^1\ldots\int_0^1 & G_\beta\cdot H_\beta\mathrm{d}t_{k-1}\ldots\mathrm{d}t_{1}=\begin{cases}
                \prod_{i=1}^{k-1}s_i^2&\text{if }\beta<1\\
                \varphi(s_1,\ldots,s_{k-1})&\text{if }\beta=1
            \end{cases},
        \end{align*}
        where
        \[
        \varphi(s_1,\ldots,s_{k-1})=\begin{cases}
                1&\text{if }k=1\\
                \prod_{i=1}^{k-1}s_i^2+\sum_{i=1}^{k-1}s_i\varphi(s_1,\ldots,\hat{s_j},\ldots,s_{k-1})&\text{if }k\geq 2
            \end{cases}
        \]
        and the hat operator means that the respective element is omitted.
    \end{enumerate}

\end{theorem}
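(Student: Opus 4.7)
The plan is to compute $\int_0^1\!\cdots\!\int_0^1 G_\beta H_\beta\,dt$ explicitly as a function of $N$ and split it into a main piece controlled by $R_{k,\beta}$ and a diagonal correction that is either negligible ($\beta<1$) or responsible for the recursive $\varphi$ ($\beta=1$).

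\textbf{Key identity.} Expanding $G_\beta H_\beta$ as a double sum and applying Fubini, each $t_l$-integration evaluates by the elementary formula
\[
\int_0^1\mathbf{1}_{\Vert x-t\Vert\leq r}\,\mathbf{1}_{\Vert y-t\Vert\leq r}\,dt \;=\; (2r-\Vert x-y\Vert)_+,
\]
valid on the torus for $r\leq 1/2$ and applied with $r=s_l/(2N^\beta)$. This rewrites (ii) as
\[
\int\!\cdots\!\int G_\beta H_\beta\,dt \;=\; \frac{1}{N^{k-2(k-1)\beta}}\sum_{\substack{i_1,\ldots,i_{k-1}\\\text{distinct}}}\sum_{i=1}^N\prod_{l=1}^{k-1}\!\bigl(s_l/N^\beta-\Vert x_{i_l}-x_i\Vert\bigr)_+.
\]

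\textbf{Decomposition.} Partition the inner sum over $i$ by whether $i\in\{i_1,\ldots,i_{k-1}\}$. If $i\notin$ this set, relabel $i=i_k$; writing each tent as $(s/N^\beta-y)_+=\int_0^{s/N^\beta}\mathbf{1}_{y\leq r}dr$, swapping sum and integral, and substituting $u_l=r_l N^\beta$ yields the main piece
\[
A(N) \;=\; \int_0^{s_1}\!\!\cdots\!\!\int_0^{s_{k-1}} R_{k,\beta}(u_1,\ldots,u_{k-1},N)\,du.
\]
If $i=i_j$, the $l=j$ tent collapses to $s_j/N^\beta$ and the same layer-cake trick on the remaining $k-2$ factors produces
\[
B_j(N) \;=\; s_j\,N^{\beta-1}\!\int_{[0,s_1]\times\cdots\widehat{[0,s_j]}\cdots\times[0,s_{k-1}]}\! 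R_{k-1,\beta}^{(j)}(u,N)\,du,
\]
where $R_{k-1,\beta}^{(j)}$ denotes the $(k-1,\beta)$-box correlation statistic with $i_j$ as reference. Thus $\int G_\beta H_\beta\,dt=A(N)+\sum_{j=1}^{k-1}B_j(N)$ is an exact identity for every $N$.

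\textbf{The case $\beta<1$.} The prefactor $N^{\beta-1}\to 0$ kills each $B_j$, provided $\int R_{k-1,\beta}^{(j)}(u,N)\,du$ stays bounded in $N$; this bound follows from monotonicity of the count in $u$ plus a crude combinatorial estimate. So (ii) reduces to $A(N)\to\prod s_l^2$. Direction (i)$\Rightarrow$(ii) is dominated convergence, with the monotone majorant supplied by $R_{k,\beta}(s_1,\ldots,s_{k-1},N)$ itself. For the converse I would apply Helly's selection theorem to the monotone family $\{R_{k,\beta}(\cdot,N)\}_N$ to extract any subsequential pointwise limit $r(u)$; then $\int_0^s r\,du=\prod s_l^2$, Lebesgue differentiation, and the continuity inherited from monotonicity force $r(u)=\prod 2u_l$ everywhere.

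\textbf{The case $\beta=1$ and main obstacle.} Now the prefactor $N^{\beta-1}=1$ keeps each $B_j(N)=s_j\int R_{k-1,1}^{(j)}(u,N)\,du$ alive in the limit. The natural route is induction on $k$: the base case is tautological, and the inductive step inserts the level-$(k-1)$ biconditional into each $B_j$ (noting that $\int R_{k-1,1}^{(j)}\,du$ is itself of the form $A(N)$ one level down), thereby reproducing the recursive formula for $\varphi$. The genuine difficulty is that (i) at level $k$ does not on its own guarantee convergence of $R_{k-1,1}^{(j)}$, so the induction must carry the full equivalence at every level and thread the level-$k$ and level-$(k-1)$ decompositions together so that the forward and backward directions can be extracted without circularity. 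Sorting out this bootstrap between levels is the main technical hurdle.
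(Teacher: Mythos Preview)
Your identity and decomposition $\int G_\beta H_\beta\,dt = A(N)+\sum_j B_j(N)$, with $A(N)=\int_0^{s_1}\!\cdots\!\int_0^{s_{k-1}} R_{k,\beta}(\sigma,N)\,d\sigma$ and $B_j(N)=s_j\,N^{\beta-1}\int R_{k-1,\beta}(\sigma_{\neq j},N)\,d\sigma_{\neq j}$, is exactly what the paper derives. The paper separates off the equivalence ``$A(N)\to\prod s_i^2$ for all $s$'' $\Longleftrightarrow$ ``$(k,\beta)$-Poissonian box correlations'' as an independent Proposition and then reads the theorem off from the identity plus that Proposition.

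Where you differ is in the mechanism for this last equivalence. Instead of Helly selection plus Lebesgue differentiation, the paper uses a completely elementary two-sided Riemann-sum sandwich (exploiting monotonicity of $R_{k,\beta}$ in each $\sigma_l$) for (i)$\Rightarrow$(ii), and for (ii)$\Rightarrow$(i) an inclusion--exclusion identity $\mathds{1}_{\prod[s_i-\varepsilon,s_i]}=\sum_{P\subseteq[k-1]}(-1)^{|P|}\mathds{1}_{\prod_{i\in P}[0,s_i-\varepsilon]}\mathds{1}_{\prod_{i\notin P}[0,s_i]}$, which expresses $\int_{s-\varepsilon}^{s} R_{k,\beta}\,d\sigma$ as an alternating sum of integrals from $0$; then $\varepsilon^{-(k-1)}\int_{s-\varepsilon}^s\!R_{k,\beta}\leq R_{k,\beta}(s,N)\leq \varepsilon^{-(k-1)}\int_s^{s+\varepsilon}\!R_{k,\beta}$ and a direct limit in $\varepsilon$. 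This avoids compactness arguments entirely and gives the conclusion at every point, not just Lebesgue-a.e.; your Helly route would need an extra step to upgrade from a.e.\ to everywhere.

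For $\beta<1$ the paper simply records the prefactor $N^{\beta-1}$ and moves on; your remark that boundedness of $\int R_{k-1,\beta}\,du$ ``follows from monotonicity plus a crude combinatorial estimate'' would need to be justified, since $(k,\beta)$-box correlations do not obviously control $R_{k-1,\beta}$. For $\beta=1$ you are right that the correction terms involve $R_{k-1,1}$, and you flag the bootstrap between levels as the main hurdle. The paper does not set up an induction here: it writes the correction as $\sum_j s_j\int R_{k-1,1}(\sigma_{\neq j},N)\,d\sigma_{\neq j}$ and then declares the equivalence ``immediate'' from the integral Proposition, without explaining why the level-$(k-1)$ limits exist. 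So your concern is well placed, but the paper's route is to use its Proposition at both levels rather than to thread an inductive argument as you propose.
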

Theorem~\ref{thm:poiss_corr_eq_def_fun_G_H} is a generalization of \cite[Lemma 9]{hauke:weak_poiss_corr}, where the claim has been shown for $k=2$. More generally, we notice that since $k$ is not necessarily even, it is not possible in the general setting to use the square of a single function on the left-hand side of the integral. We also believe that Theorem~\ref{thm:poiss_corr_eq_def_fun_G_H} is an important stepping stone in proving that stronger correlations imply weaker ones, see Conjecture \ref{conj:poiss_k_stronger_impl_weaker} later in this paper.\\[12pt]
While Poissonian pair correlations are a generic property of uniformly distributed sequences, it turns out hard to find explicit examples in the case $\beta = 1$. Some of the few known examples can be found in \cite{BMV15,LST21,LT22}. One obstacle in finding such sequences lies in the gap structure of a sequence: for an ordered set $
x_1 \leq x_2 \leq \ldots \leq x_N \subset [0,1)$ of points in $[0,1)$, the gap lengths are defined by $\Vert x_{i+1}-x_{i}\Vert$, $1\leq i<N$, where we set $x_{N+1} = x_1$. For an arbitrary set $\{x_1,\ldots,x_N\}$ of points in $[0,1)$, a gap is defined as a gap in the corresponding ordered set. Indeed, the number of different gap lengths has a great influence on the Poissonian pair correlations property of sequences. In \cite[Proposition 1]{larcher:som_neg_results_poiss_pair_corr}, Larcher and Stockinger prove the following.
\begin{theorem}
    Let $\mathseq{x_n}{n\in\N}$ be a sequence in $[0,1)$. If there is some $Z\in\N$, some $\gamma>0$ and infinitely many $N\in\N$ such that $x_1,\ldots,x_N$ has a subset of size $M\geq \gamma N$ which has at most $Z$ different distinct distances between neighboring elements, then $\mathseq{x_n}{n\in\N}$ does not have Poissonian pair correlations.
\end{theorem}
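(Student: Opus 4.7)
My plan is to argue by contradiction using a pigeonhole-style concentration of pair distances at a single rescaled scale. Define the empirical pair counting function
\[
F_N(s)\;:=\;\frac{1}{N}\card{\mathset{(i,j)}{1\leq i\neq j\leq N,\ \norm{x_i-x_j}\leq s/N}},\qquad s\geq 0.
\]
Each $F_N$ is non-decreasing in $s$, and if $\mathseq{x_n}{n\in\N}$ had Poissonian pair correlations then $F_N(s)\to 2s$ for every $s\geq 0$. A standard P\'olya-type statement (monotone convergence to a continuous function is uniform on compact subintervals) upgrades this to uniform convergence on every compact subinterval of $[0,\infty)$. My goal is to exhibit a subsequence $N_k$, a scale $s^*\in[0,\infty)$, and a positive constant $c>0$ such that $F_{N_k}(s^*+\epsilon)-F_{N_k}(\max(0,s^*-\epsilon))\geq c$ for all small $\epsilon>0$ and all large $k$; since this lower bound does not shrink with $\epsilon$, it will contradict the Poissonian limit $2(s^*+\epsilon)-2\max(0,s^*-\epsilon)\leq 4\epsilon$.

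For each of the infinitely many $N$ given by the hypothesis, let $y_1<y_2<\ldots<y_M$ denote the ordered version of the distinguished subset, with $M\geq\gamma N$ and at most $Z$ distinct neighbor-gap lengths. Since $\sum_{i=1}^{M-1}(y_{i+1}-y_i)\leq 1$, at least $(M-1)/2$ of these gaps are bounded by $2/(M-1)$. By pigeonhole, some specific value $d_N^*\leq 2/(M-1)$ occurs among the small gaps at least
\[
c_N\;\geq\;\frac{M-1}{2Z}\;\geq\;\frac{\gamma N-1}{2Z}
\]
times. Crucially, each such consecutive pair $(y_i,y_{i+1})$ is a genuine pair of points of $\{x_1,\ldots,x_N\}$ at distance exactly $d_N^*$, so it contributes to $F_N$. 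The rescaled scale $s_N:=Nd_N^*$ stays bounded (by roughly $2/\gamma$), so after passing to a further subsequence $N_k$ I may assume $s_{N_k}\to s^*$ for some $s^*\in[0,2/\gamma]$.

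For any $\epsilon>0$, once $k$ is large enough that $|s_{N_k}-s^*|<\epsilon$, the distance $d_{N_k}^*$ lies in the window $(\max(0,s^*-\epsilon)/N_k,\ (s^*+\epsilon)/N_k]$, so the $c_{N_k}$ unordered gap pairs give rise to $2c_{N_k}$ ordered pairs counted by $F_{N_k}(s^*+\epsilon)-F_{N_k}(\max(0,s^*-\epsilon))$. This yields
\[
F_{N_k}(s^*+\epsilon)\;-\;F_{N_k}(\max(0,s^*-\epsilon))\;\geq\;\frac{2c_{N_k}}{N_k}\;\geq\;\frac{\gamma-1/N_k}{Z}.
\]
For $k$ large the right-hand side exceeds $\gamma/(2Z)$; picking any $\epsilon<\gamma/(8Z)$ then contradicts the uniform-on-compacts estimate $F_{N_k}(s^*+\epsilon)-F_{N_k}(\max(0,s^*-\epsilon))\leq 4\epsilon+o(1)$ that Poissonian pair correlations would force. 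The main delicate point is not any single step but their interplay: the dominant gap value $d_N^*$ a priori depends on $N$, so I must compactify via the bound $Nd_N^*\lesssim 1/\gamma$ and extract a convergent subsequence before I can localize the concentration of mass at a single scale $s^*$; once that is done, the jump-detection step is routine.
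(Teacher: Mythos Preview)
Your argument is correct. Note, however, that the paper does not itself prove this statement: it is quoted from Larcher and Stockinger. What the paper proves is the related Theorem~\ref{thm:gap_strucure_poiss_k_corr}, which generalizes to $(k,1)$-Poissonian box correlations but (as actually written) drops the subset-of-density-$\gamma$ refinement.

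Comparing your approach with the paper's proof of Theorem~\ref{thm:gap_strucure_poiss_k_corr}: the paper first passes to a subsequence on which there are exactly $Z$ gap values, classifies each gap value $d_j$ according to whether $N d_j\to\infty$ (large), $d_j=0$ (zero), or $N d_j$ stays bounded (medium), and then eliminates each type separately, using a $\delta$-cover to localize the medium gaps. You bypass this trichotomy entirely: by first restricting attention to the at least $(M-1)/2$ gaps of length at most $2/(M-1)$ and then pigeonholing among the $\leq Z$ values, you produce in one stroke a gap value that is both frequent and automatically bounded after rescaling, so a single Bolzano--Weierstrass extraction replaces the case split. The core contradiction---a positive mass concentrated in an arbitrarily short window of the rescaled pair-count function, against the continuous limit $2s$---is the same in both arguments. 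Your route is shorter and directly accommodates the positive-density subset hypothesis; the paper's structure is designed to scale to $k$-tuples, where the combinatorics of gap patterns $(d_{n_1},\ldots,d_{n_{k-1}})$ make the case decomposition more natural.
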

Indeed, the same also holds true for $(k,1)$-Poissonian box correlations.
\begin{theorem}\label{thm:gap_strucure_poiss_k_corr}
    Let $\mathseq{x_n}{n\in\N}$ be a sequence in $[0,1)$. If there is some $Z\in\N$, some $\gamma>0$ and infinitely many $N\in\N$ such that $x_1,\ldots,x_N$ has at most $Z$ different distinct distances between neighboring elements, then $\mathseq{x_n}{n\in\N}$ does not have $(k,1)$-Poissonian box correlations for any $k\geq 2$. 
\end{theorem}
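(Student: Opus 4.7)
The plan is to argue by contradiction: suppose the sequence has $(k,1)$-Poissonian box correlations and derive a conflict with the gap-structure hypothesis. Writing $W_i(s):=\card{\{j\neq i:\Vert x_i-x_j\Vert\leq s/N\}}$, the specialization $s_1=\cdots=s_{k-1}=s$ in the definition collapses the counting identity to
\[
\frac{1}{N}\sum_{i=1}^N W_i(s)\bigl(W_i(s)-1\bigr)\cdots\bigl(W_i(s)-k+2\bigr)\;\longrightarrow\;(2s)^{k-1},\qquad s>0,
\]
along the infinite subsequence $(N_r)$ furnished by the hypothesis. The idea is to show that the at-most-$Z$-gap constraint forces the left-hand side to be a limit of step functions with macroscopic jumps, in conflict with the smoothness of $(2s)^{k-1}$.

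Along $(N_r)$, list the at most $Z$ distinct gap lengths $d_1(N)\leq\cdots\leq d_Z(N)$ with multiplicities $m_l(N)$ summing to $N$. A diagonal extraction provides a subsubsequence along which the rescaled gaps $D_l(N):=Nd_l(N)$ converge in $[0,\infty]$ to $D_l^*$, the frequencies $m_l(N)/N$ converge to $\mu_l^*\in[0,1]$, and for every fixed window length $m$ the local-pattern frequencies $n_{l_1,\ldots,l_m}(N)/N$ converge. The identity $\sum_l m_l d_l=1$ forces $\sum_l\mu_l^* D_l^*=1$, so at least one index $l_0$ satisfies both $D_{l_0}^*\in(0,\infty)$ and $\mu_{l_0}^*>0$.

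In the easy subcase $\min_l D_l^*>0$, pick any $s<\tfrac12\min_l D_l^*$: every gap exceeds $s/N$ for large $N$, hence $W_i(s)\equiv 0$ and the displayed average equals $0$, contradicting $(2s)^{k-1}>0$. In the remaining subcase some $D_l^*=0$, I apply a step-function argument: the $m_{l_0}(N)=(\mu_{l_0}^*+o(1))N$ sorted consecutive pairs with gap $d_{l_0}(N)$ all have the same scaled distance $D_{l_0}^*$ in the limit, so the pair-count function $s\mapsto\tfrac{1}{N}\sum_i W_i(s)$ develops a jump of size at least $2\mu_{l_0}^*$ at $s=D_{l_0}^*$. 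For $k=2$ this contradicts continuity of $2s$ directly. For $k\geq 3$, specialize instead to $s_1=\cdots=s_{k-2}=S$ and $s_{k-1}=s$ with $S$ a large constant; a direct count gives
\[
\frac{1}{N}\sum_{i=1}^N W_i(s)\bigl(W_i(S)-1\bigr)^{\underline{k-2}}\;\longrightarrow\;(2s)(2S)^{k-2},
\]
which is linear in $s$. The same pigeonhole on the $m_{l_0}(N)$ pairs produces a macroscopic jump in this weighted sum at $s=D_{l_0}^*$, contradicting linearity.

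The principal obstacle lies in the subcase where some $D_l^*=0$: a long run of tiny gaps around $y_i$ could place unboundedly many neighbors inside $[y_i-s/N,\,y_i+s/N]$, so convergence of local-pattern frequencies does not immediately yield a clean distributional limit of $W_i(s)$. This is overcome by noting that vanishing gap types contribute negligible total length ($\mu_l^*D_l^*=0$), so the set of indices trapped inside a pure vanishing-gap cluster has density $o(1)$ and is absorbed into an $o(1)$ error, leaving the jump at $s=D_{l_0}^*$ intact. For the $k\geq 3$ argument one must also ensure that the weights $(W_i(S)-1)^{\underline{k-2}}$ do not vanish on a positive-density subset of the relevant $d_{l_0}$-pairs; choosing $S$ larger than several values of $D_l^*$ forces $W_i(S)\geq k-1$ on a positive-density set of indices by a pigeonhole on local patterns, and the required intersection density then follows from the limiting pattern frequencies $\nu_{l_1,\ldots,l_m}$. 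Torus wrap-around at the two endpoints of $[0,1)$ contributes only an $O(1)$ boundary correction that is absorbed into $o(1)$.
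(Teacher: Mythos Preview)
The central gap is the step where you assert $\sum_l \mu_l^* D_l^* = 1$ and conclude that some $l_0$ has $D_{l_0}^*\in(0,\infty)$ and $\mu_{l_0}^*>0$. Passing to the limit in $\sum_l (m_l/N)(Nd_l)=1$ is illegitimate when some $D_l^*=\infty$ while $\mu_l^*=0$: the product $\mu_l^* D_l^*$ is a genuine $0\cdot\infty$, and the term $m_l d_l$ can carry all the mass. Concretely, take $Z=2$ with $d_1(N)=N^{-2}$ of multiplicity $N-\lfloor\sqrt N\rfloor$ and $d_2(N)\sim N^{-1/2}$ of multiplicity $\lfloor\sqrt N\rfloor$; then $D_1^*=0$, $\mu_1^*=1$, $D_2^*=\infty$, $\mu_2^*=0$, and no medium gap of positive frequency exists. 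Your jump argument at $s=D_{l_0}^*$ then has nothing to anchor to. A related error is the claim that ``the set of indices trapped inside a pure vanishing-gap cluster has density $o(1)$'': this confuses Lebesgue measure with point count. When $D_l^*=0$ and $\mu_l^*>0$ a \emph{positive proportion} of the points sit at vanishing-gap positions, and for those $i$ the neighbor counts $W_i(s)$ need not stay bounded, so they cannot be absorbed into an $o(1)$.

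The paper sidesteps this by never trying to pin down a single jump location. It classifies gap types into large ($N d_j\to\infty$), medium ($Nd_j$ bounded and nonzero), and zero, and shows that every pattern $\vec n\in[Z]^{k-1}$ of $k-1$ consecutive gap types containing no zero gap has asymptotic frequency $0$: patterns containing a large gap trivially, and purely medium patterns via a $\delta$-cover compactness argument on the bounded box $\prod_j[0,\hat U_j]$ combined with the continuity of $(s_1,\dots,s_{k-1})\mapsto\prod 2s_j$. Since the pattern frequencies sum to $1$, the zero-gap patterns must carry full density, forcing infinitely many coincidences $x_{i_0}=x_{i_0+1}$ and hence $R_{k,1}(0,\dots,0,N)\to\infty$ along the subsequence, the desired contradiction. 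If you want to repair your outline you must treat the ``no medium gap of positive frequency'' situation as a separate case (in the example above the vanishing-gap clusters already force $\tfrac1N\sum_i W_i(s)^{\underline{k-1}}\to\infty$, so a direct blow-up works), rather than sweeping it into an $o(1)$ error.
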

Despite the fact that our result is more general than the one in \cite{larcher:som_neg_results_poiss_pair_corr}, we are able to give a significantly more compact proof. However for $\beta < 1$, examples can be constructed by the following theorem which generalizes results from \cite{weiss:some_conn_discr_fin_gap_pair_corr}.
\begin{theorem}\label{thm:poiss_k_corr_and_discrepancy}Let $x=\mathseq{x_n}{n\in\N}$ be a sequence in $[0,1]$ with $D_N(x)=o(N^{-(1-\varepsilon)})$ for $0<\varepsilon<1$. Then the sequence has $(k,\beta)$-Poissonian box correlations for all $0 < \beta \leq 1 - \varepsilon$.
\end{theorem}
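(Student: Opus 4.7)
The plan is to fix the last index $i_k = i$ and reduce the count to a product of one-dimensional counts, each of which is controlled by the discrepancy hypothesis. Concretely, for each $i \in \{1,\ldots,N\}$ set
\[
A_j(i) := \left\{m \in \{1,\ldots,N\}\setminus\{i\} : \Vert x_m - x_i \Vert \leq \frac{s_j}{N^\beta}\right\},
\]
so that the numerator of $R_{k,\beta}(s_1,\ldots,s_{k-1},N)$ equals
\[
\sum_{i=1}^N \#\{(i_1,\ldots,i_{k-1}) \in A_1(i)\times\cdots\times A_{k-1}(i) : i_1,\ldots,i_{k-1} \text{ pairwise distinct}\}.
\]
Since $\{x \in [0,1) : \Vert x-x_i\Vert \leq s_j/N^\beta\}$ is a union of at most two intervals of total Lebesgue measure $2s_j/N^\beta$, the discrepancy hypothesis $D_N(x)=o(N^{-(1-\varepsilon)})$ yields, uniformly in $i$,
\[
|A_j(i)| = 2s_j N^{1-\beta} + o(N^\varepsilon).
\]

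Next, I would remove the distinctness constraint: forcing any fixed pair of coordinates to coincide leaves $k-3$ free coordinates each of cardinality $O(N^{1-\beta})$, so the number of non-distinct tuples is bounded by $\binom{k-1}{2}\max_j|A_j(i)|^{k-2} = O(N^{(k-2)(1-\beta)})$ for each $i$. Hence
\[
\#\{\text{distinct tuples for fixed }i\} = \prod_{j=1}^{k-1}|A_j(i)| + O(N^{(k-2)(1-\beta)}).
\]
Expanding $\prod_{j=1}^{k-1}(2s_j N^{1-\beta} + o(N^\varepsilon))$, the leading term is $\prod_{j=1}^{k-1}2s_j \cdot N^{(k-1)(1-\beta)}$, while every mixed term with $l$ factors of $2s_j N^{1-\beta}$ and $k-1-l\geq 1$ factors of $o(N^\varepsilon)$ is of size $o(N^{l(1-\beta)+(k-1-l)\varepsilon})$.

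Summing over $i$ and dividing by $N^{k-(k-1)\beta} = N \cdot N^{(k-1)(1-\beta)}$, the main term produces $\prod_{l=1}^{k-1}2s_l$, the distinctness correction contributes $O(N^{-(1-\beta)})$, and each mixed term becomes $o(N^{-(k-1-l)(1-\beta-\varepsilon)})$. All error contributions vanish in the limit precisely because $\beta \leq 1-\varepsilon$, which forces both $1-\beta-\varepsilon\geq 0$ and $1-\beta \geq \varepsilon > 0$. The main obstacle is essentially bookkeeping: one has to track every mixed term from the expansion and verify that the boundary case $\beta = 1-\varepsilon$ is still handled, which works out thanks to the strict $o(\cdot)$ in the discrepancy estimate. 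No probabilistic or combinatorial ingredient beyond the uniform one-dimensional discrepancy bound is required.
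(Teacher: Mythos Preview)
Your argument is correct and follows the same approach as the paper: fix the last index $i_k=i$, use the discrepancy hypothesis to estimate each one-dimensional count as $2s_jN^{1-\beta}+o(N^{\varepsilon})$, multiply, sum over $i$, and divide by $N^{k-(k-1)\beta}$. Your version is in fact more careful than the paper's, which writes $F_{\beta,k}(x_i,\ldots)=\prod_j(2s_jN^{1-\beta}+o(N^{\varepsilon}))$ directly without separately justifying the removal of the distinctness constraint or tracking the mixed error terms; your explicit $O(N^{(k-2)(1-\beta)})$ bound for non-distinct tuples and the verification that each mixed term is $o(N^{-(k-1-l)(1-\beta-\varepsilon)})$ fill in exactly those details.
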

In combination, Theorems \ref{thm:poiss_k_corr_and_discrepancy} and \ref{thm:gap_strucure_poiss_k_corr} provide interesting and non-trivial insight into some highly relevant examples of uniformly distributed sequences: both, Kronecker sequences for badly approximable $\alpha \in \mathbb{R}$ as well as Van-der-Corput sequences in arbitrary base, satisfy $D_N(x)=o(N^{-(1-\varepsilon)})$ for every $0<\varepsilon<1$ (see e.g. \cite{KN74}) and  hence Theorem \ref{thm:poiss_k_corr_and_discrepancy} implies that they have $(k,\beta)$-Poissonian box correlations for all $\beta<1$. At the same time Theorem \ref{thm:gap_strucure_poiss_k_corr} states that they do not have $(k,1)$-Poissonian box correlations.\\[12pt]
The remainder of the paper is organized as follows: In Section~\ref{sec:kmb_poiss_corr} we will show some basic Properties of $(k,\beta)$-box correlated sequences. Afterwards, we will in Section \ref{sec:gap_structure} take a look at the relation between $(k,1)$-box Poissonian correlations and the gap structures of sequences leading to the proof of Theorem~\ref{thm:gap_strucure_poiss_k_corr}. Furthermore, we will discuss some explicit examples. Finally, we discuss in Section~\ref{sec:future} open questions related to the content of this paper.\\[12pt]
Finally, we want to give a formal definition of the notation which we will use throughout the article: given a real number $x$, we write 
\[
\{x\}=x-\lfloor x\rfloor
\]
for the fractional part of $x$. 
The symbol
\[
((x))= \begin{cases}
\{x\}&\text{if }0\leq \{x\}\leq\frac12\\
\{x\}-1&\text{if }\{x\}\geq\frac12
\end{cases}
\]
denotes the signed distance of $x$ to the nearest integer and
\[
\Vert x \Vert=\min\mathset{\vert x-z\vert}{z\in\Z}=\vert((x))\vert
\]
is the distance of $x$ to the nearest integer. Given some $n\in\N$ we write $[n]$ for the set $\{1,\ldots,n\}$. For $x\in\R$, let $\{x\}^+=\max\{0,x\}$ describe the positive part and for a set $\{x_1,\ldots,x_n\}$ and $1\leq k\leq n$ we write 
\[
\{x_1,\ldots,\hat{x_k},\ldots,x_n\}=\{x_1,\ldots,x_n\}\setminus\{x_k\}
\]
for the set of elements without $x_k$. We will use the same notation for tuples and vectors. The closed ball of radius $r\geq 0$ around $x\in [0,1)$ is defined as
\[
B(x,r)=\mathset{y\in X}{\Vert x-y\Vert\leq r}.
\]
Finally, $\card{S}$ denotes the cardinality of a set $S$ and $C_C(\mathbb{R}^{k-1})$ is the set of continuous functions on $\mathbb{R}^{k-1}$ with compact support.

\paragraph{Acknowledgement.} The authors would like to thank Manuel Hauke for his comments on an earlier version of this paper.

\section{$(k,\beta)$-Poissonian box correlations}\label{sec:kmb_poiss_corr}

In this section, the concept of $(k,\beta)$-Poissonian box correlations is investigated more deeply. In order to do so, we will at first discuss its connection to $(k,\beta)$-Poissonian correlations. For this discussion the following theorem which is an analogue of \cite[Proposition A]{hauke:poiss_corr_higher_order} turns out to be useful.
\begin{proposition}\label{prop:equiv_def_poiss_corr}
Let $x=\mathseq{x_n}{n\in\N}$ be a sequence in $[0,1]$ and $k\in\mathbb{N}$ with $2\leq k$. The following are equivalent:
\begin{enumerate}
\item[(i)] The sequence $x$ has $(k,\beta)$-Poissonian correlations.
\item[(ii)] For all $f\in C_C(\R^{k-1})$ we have
\[
\lim_{N\to \infty}R_{k,\beta}^\prime(f,N)=\int_{\R^{k-1}}f(x)\mathrm{d}x
\]
where
\[
R_{k,\beta}^\prime(f,N)=\frac{1}{N^{k-(k-1)\beta}}\sum_{\substack{1\leq i_1,\ldots,i_k\leq N\\\text{distinct}}}f(N^\beta((x_{i_1}-x_{i_2})),N^\beta((x_{i_2}-x_{i_3})),\ldots,N^\beta((x_{i_{k-1}}-x_{i_k}))).
\]
\item[(iii)] For every rectangle $B=[a_1,b_1]\times[a_2,b_2]\times\ldots\times[a_{k-1},b_{k-1}]$ we have
\[
\lim_{N\to\infty}R_{k,\beta}(\mathds{1}_B,N)=\leb(B).
\]
\end{enumerate}
\end{proposition}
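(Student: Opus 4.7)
The plan is to establish the cycle (i) $\Leftrightarrow$ (ii) and (i) $\Leftrightarrow$ (iii). The first equivalence reduces to a linear change of variables on $\R^{k-1}$, while (i) $\Leftrightarrow$ (iii) follows by standard approximation between indicators of rectangles and functions in $C_C(\R^{k-1})$.

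For (i) $\Leftrightarrow$ (ii) I would introduce the triangular invertible map
\[
T(v_1,\ldots,v_{k-1}) := (v_1-v_2,\, v_2-v_3,\, \ldots,\, v_{k-2}-v_{k-1},\, v_{k-1}),
\]
whose matrix has determinant $\pm 1$. Given $g \in C_C(\R^{k-1})$ with support in $[-M,M]^{k-1}$, set $f := g \circ T$; a telescoping bound $|v_j| \leq (k-j)M$ shows $f \in C_C(\R^{k-1})$, and the change of variables gives $\int f = \int g$. Now fix a tuple $(i_1,\ldots,i_k)$ of distinct indices and set $v_j := ((x_{i_j}-x_{i_k}))$ and $u_j := ((x_{i_j}-x_{i_{j+1}}))$. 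The heart of the proof is the identity $R_{k,\beta}'(g,N) = R_{k,\beta}(f,N)$ for $N$ large: if either of the summands $g(N^\beta u_1,\ldots,N^\beta u_{k-1})$ or $f(N^\beta v_1,\ldots,N^\beta v_{k-1})$ is nonzero, the support bound on $g$ (respectively $f$) forces $|u_j|$ (respectively $|v_j|$) to be at most a constant multiple of $N^{-\beta}$, so for $N$ large the partial sums $\sum_{\ell=j}^{k-1} u_\ell$ lie in $(-\tfrac12,\tfrac12)$ and hence equal $v_j$ exactly without modular wrap-around. Consequently $v_j - v_{j+1} = u_j$ and $f(N^\beta v) = g(N^\beta u)$ on every contributing tuple. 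The reverse direction is the symmetric construction $f \mapsto g := f \circ T^{-1}$ with $T^{-1}(u_1,\ldots,u_{k-1}) = (u_1 + \cdots + u_{k-1},\, u_2 + \cdots + u_{k-1},\, \ldots,\, u_{k-1})$.

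For (i) $\Rightarrow$ (iii), fix a rectangle $B$ and $\varepsilon > 0$, and sandwich $\mathds{1}_B$ between $f_\varepsilon^-, f_\varepsilon^+ \in C_C(\R^{k-1})$ with $f_\varepsilon^- \leq \mathds{1}_B \leq f_\varepsilon^+$ and $\int(f_\varepsilon^+ - f_\varepsilon^-) < \varepsilon$, for instance by using piecewise-linear bumps on slightly shrunk and enlarged copies of $B$. The pointwise inequalities transfer to $R_{k,\beta}(\cdot,N)$, and applying (i) to $f_\varepsilon^\pm$ traps $\liminf$ and $\limsup$ of $R_{k,\beta}(\mathds{1}_B,N)$ inside an $\varepsilon$-window around $\leb(B)$. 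For (iii) $\Rightarrow$ (i), given $f \in C_C(\R^{k-1})$ and $\varepsilon > 0$, I would use uniform continuity on the compact support of $f$ to squeeze $f$ between step functions $h_\varepsilon^- \leq f \leq h_\varepsilon^+$, each a finite $\R$-linear combination of indicators of rectangles with $\int(h_\varepsilon^+ - h_\varepsilon^-) < \varepsilon$. By linearity of $R_{k,\beta}(\cdot,N)$ in the integrand together with (iii), $R_{k,\beta}(h_\varepsilon^\pm,N) \to \int h_\varepsilon^\pm$, which pins down $\lim_{N\to\infty} R_{k,\beta}(f,N) = \int f$.

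The only genuinely non-routine point will be the identification $v_j - v_{j+1} = u_j$ on the contributing tuples, where the nonlinearity of $((\cdot))$ could in principle cause trouble; the uniform $O(N^{-\beta})$ decay forced by the compact support of the test function resolves this for all sufficiently large $N$, after which the remainder of the argument is bookkeeping and standard approximation theory.
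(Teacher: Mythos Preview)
Your proposal is correct and follows essentially the same approach as the paper: the sandwich argument for (i) $\Rightarrow$ (iii) is identical, your step-function approximation for (iii) $\Rightarrow$ (i) is what the paper calls ``measure-theoretic induction,'' and your linear change-of-variables argument for (i) $\Leftrightarrow$ (ii) is exactly the content of \cite[Proposition~A]{hauke:poiss_corr_higher_order}, to which the paper simply defers. Your write-up is in fact more explicit than the paper's, particularly in handling the wrap-around issue for $((\cdot))$ via the compact-support bound.
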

\begin{proof}
First, we show $(i)\implies(iii)$. For $\delta>0$ define 
\[
B_\delta^+=[a_1-\delta,b_1+\delta]\times[a_2-\delta,b_2+\delta]\times\ldots\times[a_{k-1}-\delta,b_{k-1}+\delta]
\]
\[
B_\delta^-=[a_1+\delta,b_1-\delta]\times[a_2+\delta,b_2-\delta]\times\ldots\times[a_{k-1}+\delta,b_{k-1}-\delta]
\]
For arbitrary $\varepsilon>0$, let $\delta>0$ such that $\leb(B_\delta^+)\leq\leb(B)+\varepsilon$ and $\leb(B_\delta^-)\geq\leb(B)-\varepsilon$. For $1\leq i\leq k-1$ choose a continuous function $f^+_i:\R\to [0,1] $ with $f^+_i|_{[a_i,b_i]}\equiv 1$, and $f_i^+(x)=0$ for $x\leq a_i-\delta$ and $f_i^+(x)=0$ for $x\geq b_i+\delta$. Similarly choose a continuous function $f^-_i:\R\to [0,1] $ with $f^-_i|_{[a_i+\delta,b_i-\delta]}\equiv 1$, and $f_i^-(x)=0$ for $x\leq a_i$ and $f_i^+(x)=0$ for $x\geq b_i$. Further define $f^+,f^-\in C_C(\R^{k-1})$ as $f^+(x_1,x_2,\ldots,x_{k-1})=\prod_{i=1}^{k-1}f^+_i(x_i)$ and $f^-(x_1,x_2,\ldots,x_{k-1})=\prod_{i=1}^{k-1}f^-_i(x_i)$. Then 
\[
R_{k,\beta}(f^-,N)\leq R_{k,\beta}(\mathds{1}_B,N)\leq R_{k,\beta}(f^+,N)
\]
and thereby
\begin{align*}
\leb(B)-\varepsilon&\leq\leb(B_\delta^-)=\int_{\R^{k-1}}\mathds{1}_{B_\delta^-}(x)\mathrm{d}x\leq\int_{\R^{k-1}}f^-(x)\mathrm{d}x\\
&=\lim_{N\to\infty}R_{k,\beta}(f ^-,N)\leq \lim_{N\to\infty}R_{k,\beta}(\mathds{1}_B,N)\leq \lim_{N\to\infty}R_{k,\beta}(f^+,N)\\
&\leq\int_{\R^{k-1}}\mathds{1}_{B_\delta^+}(x)\mathrm{d}x\leq \leb(B_\delta^+)=\leb(B)+\varepsilon.
\end{align*}
Since this is true for all $\varepsilon>0$, the desired result is derived.\\
$(i)\implies(iii)$ can be shown using measure-theoretic induction.\\
$(i)\Leftrightarrow(ii)$ follows verbatim as in the proof of \cite[Proposition A]{hauke:poiss_corr_higher_order}.
\end{proof}
In order to keep the presentation compact, we use the abbreviation
$$R_{k,\beta}(s,N)=R_{k,\beta}(\underbrace{s,\ldots,s}_{k-1\text{ times}},N)=R_{k,\beta}\left(\mathds{1}_{[-s,s]^{k-1}},N\right).$$ 
Recall from the introduction that a sequence has $(k,\beta)$-Poissonian box correlations if
\[
\lim_{N\to\infty}R_{k,\beta}(s_1,\ldots,s_{k-1},N)=\prod_{l=1}^{k-1}2s_l
\]
for all $s_1,\ldots,s_{k-1}>0$.\\[12pt]
Obviously, the definition of $(k,\beta)$-Poissonian box correlations is a special case of Proposition~\ref{prop:equiv_def_poiss_corr}~(iii). This means that every sequence with $(k,\beta)$-Poissonian correlations also has $(k,\beta)$-Poissonian box correlations. For $k=2$, a simple symmetry argument shows that the converse is also true, see \cite[Appendix A]{hauke:poiss_corr_higher_order}. For $k>2$, however, this argument fails. Although this does not necessarily imply that the two definitions are indeed distinct, for the time being it is reasonable to use two different names. 

We now move towards the proof of Theorem~\ref{thm:poiss_corr_eq_def_fun_G_H}. First, we show the following equivalent characterization of $(k,\beta)$-Poissonian box correlation.

\begin{proposition}\label{prop:poiss_corr_eq_def_fun_G_H}
    Let $\mathseq{x_n}{n\in\N}$ be a sequence in $[0,1)$. Let $\beta\in(0,1]$ and $k\geq 2$. Then the following are equivalent:
    \begin{enumerate}
        \item[(i)] For any $s_1,\ldots,s_{k-1}>0$ we have
        \[
            \lim_{N\to\infty}R_{k,\beta}(s_1,\ldots,s_{k-1},N)=\prod_{i=1}^{k-1}2s_i.
        \]
        \item[(ii)] For any $s_1,\ldots,s_{k-1}>0$ we have
        \[
            \lim_{N\to\infty}\int_0^{s_1}\ldots\int_0^{s_{k-1}} R_{k,\beta}(\sigma_1,\ldots,\sigma_{k-1},N)\mathrm{d}\sigma_{k-1}\ldots\mathrm{d}\sigma_{1}=\prod_{i=1}^{k-1}s_i^2.
        \]
    \end{enumerate}

\end{proposition}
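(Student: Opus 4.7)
The proof rests on a single structural observation: for each fixed sequence and each fixed $N$, the quantity $R_{k,\beta}(\sigma_1,\ldots,\sigma_{k-1},N)$ is coordinate-wise non-decreasing in $(\sigma_1,\ldots,\sigma_{k-1})$, because enlarging any $\sigma_l$ only relaxes the defining constraint $\Vert x_{i_l}-x_{i_k}\Vert \leq \sigma_l/N^\beta$. This monotonicity is what links pointwise and integrated convergence in both directions of the equivalence.

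For (i)$\Rightarrow$(ii), the plan is to apply the dominated convergence theorem on the box $[0,s_1]\times\cdots\times[0,s_{k-1}]$. By monotonicity, $R_{k,\beta}(\sigma,N)\leq R_{k,\beta}(s_1,\ldots,s_{k-1},N)$ for every $\sigma$ in the box, and since the right-hand side converges by (i) it is uniformly bounded in $N$; this supplies an integrable constant majorant. Passing the limit inside the integral and evaluating $\int_0^{s_1}\cdots\int_0^{s_{k-1}}\prod_l 2\sigma_l\,d\sigma = \prod_l s_l^2$ then delivers (ii).

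The genuinely interesting direction is (ii)$\Rightarrow$(i), where pointwise convergence of $R_{k,\beta}(s,N)$ has to be extracted from integrated information. I would first extend (ii) from corner rectangles to arbitrary boxes: decomposing $\mathds{1}_{[a_l,b_l]}=\mathds{1}_{[0,b_l]}-\mathds{1}_{[0,a_l]}$ in each coordinate and using Fubini rewrites the target integral as a signed sum of $2^{k-1}$ corner integrals, each of which converges by (ii); the product-form identity then gives
$$\lim_{N\to\infty}\int_{a_1}^{b_1}\!\!\cdots\int_{a_{k-1}}^{b_{k-1}}\! R_{k,\beta}(\sigma,N)\,d\sigma_{k-1}\cdots d\sigma_1 = \prod_{l=1}^{k-1}(b_l^2-a_l^2)$$
for all $0\leq a_l\leq b_l$. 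Now fix $s=(s_1,\ldots,s_{k-1})$ with each $s_l>0$ and small $h=(h_1,\ldots,h_{k-1})$ with $0<h_l<s_l$. Monotonicity yields
$$R_{k,\beta}(s,N)\prod_l h_l \leq \int_{s_1}^{s_1+h_1}\!\!\cdots\int_{s_{k-1}}^{s_{k-1}+h_{k-1}}\! R_{k,\beta}(\sigma,N)\,d\sigma \leq R_{k,\beta}(s+h,N)\prod_l h_l,$$
together with an analogous sandwich over the box $\prod_l[s_l-h_l,s_l]$. Dividing by $\prod_l h_l$, taking $N\to\infty$ via the extended box-integral limit, and then letting $h_l\to 0$ pinches both the $\liminf$ and the $\limsup$ of $R_{k,\beta}(s,N)$ to $\prod_l 2s_l$. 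The only point that requires any real care is keeping track of both one-sided sandwiches so that the upper and lower bounds both emerge; this is the key place where monotonicity in each coordinate (rather than merely in one) is used, and everything else is routine bookkeeping.
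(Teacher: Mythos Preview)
Your proposal is correct. For (ii)$\Rightarrow$(i) you follow essentially the same route as the paper: decompose an arbitrary box into a signed sum of corner boxes (the paper packages this as Lemma~\ref{lem:in_and_out_formula}), pass to the limit using (ii), and then use coordinatewise monotonicity to squeeze $R_{k,\beta}(s,N)$ between averages over $\prod_l[s_l-h_l,s_l]$ and $\prod_l[s_l,s_l+h_l]$. The paper takes uniform $h_l=\varepsilon$ and writes out the $\mathcal{O}(\varepsilon^k)$ expansion, but the argument is identical.

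For (i)$\Rightarrow$(ii) your approach genuinely differs. The paper partitions $[0,s_1]\times\cdots\times[0,s_{k-1}]$ into $M^{k-1}$ sub-boxes, bounds the integral above and below by Riemann-type sums via monotonicity, takes $N\to\infty$ using (i) at each grid point, and then lets $M\to\infty$. You instead observe that convergence of $R_{k,\beta}(s_1,\ldots,s_{k-1},N)$ gives a uniform-in-$N$ bound $C$, which by monotonicity dominates $R_{k,\beta}(\sigma,N)$ on the whole box, so dominated convergence applies directly. Your version is shorter and avoids the bookkeeping of the grid sums; the paper's version has the minor advantage of being entirely elementary (no measure-theoretic convergence theorem invoked), but in this setting that buys little since $R_{k,\beta}(\cdot,N)$ is a step function and measurability is immediate.
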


One key ingredient for the proof of Proposition~\ref{prop:poiss_corr_eq_def_fun_G_H} is the following assertion. 
\begin{lemma}\label{lem:in_and_out_formula}
    Let $k\in\N$ and let $0\leq a_i<b_i$ be real numbers for $1\leq i\leq k$. Then
    \[
        \mathds{1}_{\prod_{i=1}^{k}[a_i,b_i]}=\sum_{P\subseteq[k]}(-1)^{\card{P}}\mathds{1}_{\prod_{i\in P}[0,a_i]}\cdot\mathds{1}_{\prod_{i\notin P}[0,b_i]}
    \]
    holds almost everywhere with respect to the Lebesgue measure, 
    where the sum is taken over all subsets $P$.
\end{lemma}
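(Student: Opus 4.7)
The plan is to reduce the claim to the one-dimensional identity
\[
\mathds{1}_{[a,b]}(x)=\mathds{1}_{[0,b]}(x)-\mathds{1}_{[0,a]}(x),
\]
which is valid for all $x\in\R$ except at $x=a$ (where the left-hand side is $1$ and the right-hand side is $0$). In particular the identity holds almost everywhere on $\R$, and the exceptional set $\{a\}$ has Lebesgue measure zero.

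Next I would exploit the product structure of indicators of rectangles: for every $(x_1,\ldots,x_k)\in\R^k$,
\[
\mathds{1}_{\prod_{i=1}^k[a_i,b_i]}(x_1,\ldots,x_k)=\prod_{i=1}^k \mathds{1}_{[a_i,b_i]}(x_i).
\]
Substituting the one-dimensional identity in each factor yields
\[
\prod_{i=1}^k \mathds{1}_{[a_i,b_i]}(x_i)=\prod_{i=1}^k\Bigl(\mathds{1}_{[0,b_i]}(x_i)-\mathds{1}_{[0,a_i]}(x_i)\Bigr)
\]
for every $(x_1,\ldots,x_k)$ outside the set $E=\bigcup_{i=1}^k\{x\in\R^k:x_i=a_i\}$, which is a finite union of hyperplanes and hence a Lebesgue null set.

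The final step is purely combinatorial: distributing the product over the differences indexes the resulting monomials by subsets $P\subseteq[k]$, where $P$ records the coordinates in which the factor $-\mathds{1}_{[0,a_i]}(x_i)$ is chosen. This gives
\[
\prod_{i=1}^k\Bigl(\mathds{1}_{[0,b_i]}(x_i)-\mathds{1}_{[0,a_i]}(x_i)\Bigr)=\sum_{P\subseteq[k]}(-1)^{\card{P}}\prod_{i\in P}\mathds{1}_{[0,a_i]}(x_i)\prod_{i\notin P}\mathds{1}_{[0,b_i]}(x_i),
\]
and the right-hand side is precisely $\sum_{P\subseteq[k]}(-1)^{\card P}\mathds{1}_{\prod_{i\in P}[0,a_i]\times\prod_{i\notin P}[0,b_i]}(x_1,\ldots,x_k)$, where the product sets are interpreted in the appropriate coordinates and the convention that the empty product of indicators equals the indicator of the whole space handles the boundary cases $P=\emptyset$ and $P=[k]$. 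Since the identity holds pointwise off the null set $E$, it holds almost everywhere, as claimed.

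There is no real obstacle here; the argument is essentially a telescoped inclusion-exclusion, and the only point requiring care is to acknowledge that the basic one-dimensional identity fails at the left endpoint, forcing the conclusion to be stated modulo a Lebesgue null set rather than pointwise.
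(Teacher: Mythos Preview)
Your argument is correct and rests on the same underlying idea as the paper's proof: the one-dimensional identity $\mathds{1}_{[a,b]}=\mathds{1}_{[0,b]}-\mathds{1}_{[0,a]}$ almost everywhere, combined with the product structure of the indicator of a rectangle. The only cosmetic difference is that the paper packages the expansion as an induction on $k$, peeling off one coordinate at a time, whereas you expand the full product $\prod_{i=1}^k(\mathds{1}_{[0,b_i]}-\mathds{1}_{[0,a_i]})$ in one step; both routes yield the same inclusion--exclusion sum indexed by subsets $P\subseteq[k]$.
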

While the result is surely well-known to experts, we were not able to find a reference in the literature and hence include a proof in this paper for the sake of completeness.
\begin{proof}
    We prove this via an induction on $k$. The assertion is obvious for $k=1$. Now let $k\geq 2$ and assume that the claim holds for $k-1$. Then
    \begin{align*}
        \mathds{1}_{\prod_{i=1}^{k}[a_i,b_i]}=&\mathds{1}_{\prod_{i=1}^{k-1}[a_i,b_i]}\cdot\mathds{1}_{[0,b_i]}-\mathds{1}_{\prod_{i=1}^{k-1}[a_i,b_i]}\cdot\mathds{1}_{[0,a_i]}\\
        =&\sum_{P\subseteq[k-1]}(-1)^{\card{P}}\mathds{1}_{\prod_{i\in P}[0,a_i]}\cdot\mathds{1}_{\prod_{i\notin P}[0,b_i]}\cdot\mathds{1}_{[0,b_i]}\\
        &-\sum_{P\subseteq[k-1]}(-1)^{\card{P}}\mathds{1}_{\prod_{i\in P}[0,a_i]}\cdot\mathds{1}_{\prod_{i\notin P}[0,b_i]}\cdot\mathds{1}_{[0,a_i]}\\
        =&\sum_{P\subseteq[k-1]}(-1)^{\card{P}}\mathds{1}_{\prod_{i\in P}[0,a_i]}\cdot\mathds{1}_{\prod_{i\notin P}[0,b_i]}\cdot\mathds{1}_{[0,b_i]}\\
        &+\sum_{P\subseteq[k-1]}(-1)^{\card{P}+1}\mathds{1}_{\prod_{i\in P}[0,a_i]}\cdot\mathds{1}_{\prod_{i\notin P}[0,b_i]}\cdot\mathds{1}_{[0,a_i]}\\
        &=\sum_{P\subseteq[k]}(-1)^{\card{P}}\mathds{1}_{\prod_{i\in P}[0,a_i]}
    \end{align*}
almost everywhere.
\end{proof}
This sets us into the position to show Proposition~\ref{prop:poiss_corr_eq_def_fun_G_H}.
\begin{proof}[Proof of Proposition~\ref{prop:poiss_corr_eq_def_fun_G_H}]
    We start by showing that (i) implies (ii). Let $M\geq 1$ be an arbitrary integer and for $1\leq i\leq k-1$ let $0\leq j_i\leq M-1$ be arbitrary as well. First, by the monotonicity of the $R_{k,\beta}$ function in its first $k-1$ parameters we have
    \begin{align*}
    \frac{\prod_{i=1}^{k-1}s_i}{M^{k-1}}&R_{k,\beta}\left(\frac{j_1s_1}{M},\ldots,\frac{j_{k-1}s_{k-1}}{M},N\right)\\
    &\leq  \int_{\frac{j_1s_1}{M}}^{\frac{(j_1+1)s_1}{M}}\ldots\int_{\frac{j_{k-1}s_{k-1}}{M}}^{\frac{(j_{k-1}+1)s_{k-1}}{M}}R_{k,\beta}(\sigma_1,\ldots,\sigma_{k-1},N)\mathrm{d}\sigma_{k-1}\ldots\mathrm{d}\sigma_{1}\\
    &\leq\frac{\prod_{i=1}^{k-1}s_i}{M^{k-1}}R_{k,\beta}\left(\frac{(j_1+1)s_1}{M},\ldots,\frac{(j_{k-1}+1)s_{k-1}}{M},N\right)
    \end{align*}
    and therefore 
    \begin{align} \label{eq:lem_eq_poiss_k_corr_ineq_int}
    \begin{split}
    & \frac{\prod_{i=1}^{k-1}s_i}{M^{k-1}}\sum_{1\leq j_1,\ldots ,j_{k-1}\leq M-1}R_{k,\beta}\left(\frac{j_1s_1}{M},\ldots,\frac{j_{k-1}s_{k-1}}{M},N\right)\\
    &\leq \int_{0}^{s_1}\ldots\int_{0}^{s_{k-1}}R_{k,\beta}(\sigma_1,\ldots,\sigma_{k-1},N)\mathrm{d}\sigma_{k-1}\ldots\mathrm{d}\sigma_{1}\\
    &\leq\frac{\prod_{i=1}^{k-1}s_i}{M^{k-1}}\sum_{1\leq j_1,\ldots ,j_{k-1}\leq M-1}R_{k,\beta}\left(\frac{(j_1+1)s_1}{M},\ldots,\frac{(j_{k-1}+1)s_{k-1}}{M},N\right).
    \end{split}
    \end{align}
    By taking the limit $N\to\infty$ and using (i) afterwards we get
    \begin{align*}
        \lim_{N\to\infty}\sum_{1\leq j_1,\ldots ,j_{k-1}\leq M-1}R_{k,\beta}&\left(\frac{j_1s_1}{M},\ldots,\frac{j_{k-1}s_{k-1}}{M},N\right)=\sum_{1\leq j_1,\ldots ,j_{k-1}\leq M-1}\prod_{i=1}^{k-1}\frac{2j_is_i}{M}\\
        &=\frac{2^{k-1}\prod_{i=1}^{k-1}s_i}{M^{k-1}}\sum_{j_1=1}^{M-1}j_1\cdot\ldots\cdot\sum_{j_{k-1}=1}^{M-1}j_{k-1}\\
        &=\frac{2^{k-1}\prod_{i=1}^{k-1}s_i}{M^{k-1}}\frac12(M-1)M\cdot\ldots\cdot\frac12(M-1)M\\
        &=(M-1)^{k-1}\prod_{i=1}^{k-1}s_i
    \end{align*}
    and in the same way 
    \[
    \lim_{N\to\infty}\sum_{1\leq j_1,\ldots ,j_{k-1}\leq M}R_{k,\beta}\left(\frac{(j_1+1)s_1}{M},\ldots,\frac{(j_{k-1}+1)s_{k-1}}{M},N\right)=(M+1)^{k-1}\prod_{i=1}^{k-1}s_i.
    \]
    Plugging these two equations into \eqref{eq:lem_eq_poiss_k_corr_ineq_int} yields
    \begin{align*}
    \frac{\prod_{i=1}^{k-1}s_i}{M^{k-1}}(M-1)^{k-1}\prod_{i=1}^{k-1}s_i&\leq \int_{0}^{s_1}\ldots\int_{0}^{s_{k-1}}R_{k,\beta}(\sigma_1,\ldots,\sigma_{k-1},N)\mathrm{d}\sigma_{k-1}\ldots\mathrm{d}\sigma_{1}\\
    &\leq\frac{\prod_{i=1}^{k-1}s_i}{M^{k-1}}(M+1)^{k-1}\prod_{i=1}^{k-1}s_i.
    \end{align*}
    Letting $M\to\infty$ proves (ii).\\[12pt]
    Next we show that (ii) implies (i). Let $\varepsilon>0$. First, we want to express the integral 
    \[
    I^-(N,\varepsilon):=\int_{s_1-\varepsilon}^{s_1}\ldots\int_{s_{k-1}-\varepsilon}^{s_{k-1}}R_{k,\beta}(\sigma_1,\ldots,\sigma_{k-1},N)\mathrm{d}\sigma_{k-1}\ldots\mathrm{d}\sigma_{1}
    \]
    in terms of integrals starting at $0$. In other words we want to express the box $\prod_{i=1}^{k-1}[s_i-\varepsilon,s_i]$ in terms of boxes starting at zero. Lemma \ref{lem:in_and_out_formula} leads to
    \[
        \mathds{1}_{\prod_{i=1}^{k-1}[s_i-\varepsilon,s_i]}=\sum_{P\subseteq[k-1]}(-1)^{\card{P}}\mathds{1}_{\prod_{i\in P}[0,s_i-\varepsilon]}\cdot\mathds{1}_{\prod_{i\notin P}[0,s_i]}
    \]
    almost everywhere. Together with assumption (ii) this yields
    \begin{equation}\label{eq:lem_eq_poiss_k_corr_eq_lim_int}
        \lim_{N\to\infty}I^-(N,\varepsilon)=\sum_{P\subseteq[k-1]}(-1)^{\card{P}}\prod_{i\in P}(s_i-\varepsilon)^2\prod_{i\notin P}s_i^2.
    \end{equation}
    For any $k\geq 2$ thus
    \begin{equation}\label{eq:lem_eq_poiss_k_corr_eq_si2}
        \sum_{P\subseteq[k-1]}(-1)^{\card{P}}\prod_{i\in P}(s_i-\varepsilon)^2\prod_{i\notin P}s_i^2=\prod_{i=1}^{k-1}2s_i\varepsilon+\mathcal{O}(\varepsilon^k).
    \end{equation}
    Hence,
    \[
    \lim_{\varepsilon\to 0}\lim_{N\to\infty}\frac{1}{\varepsilon^{k-1}}I^-(N,\varepsilon)=\prod_{i=1}^{k-1}2s_i.
    \]
    Similarly it can be shown for
    \[
    I^+(N,\varepsilon):=\int_{s_1}^{s_1+\varepsilon}\ldots\int_{s_{k-1}}^{s_{k-1}+\varepsilon}R_{k,\beta}(\sigma_1,\ldots,\sigma_{k-1},N)\mathrm{d}\sigma_{k-1}\ldots\mathrm{d}\sigma_{1}
    \]
    that
    \[
    \lim_{\varepsilon\to 0}\lim_{N\to\infty}\frac{1}{\varepsilon^{k-1}}I^+(N,\varepsilon)=\prod_{i=1}^{k-1}2s_i.
    \]
    The monotony of $R_{k,\beta}$ implies
    \[
    \frac{1}{\varepsilon^{k-1}}I^-(N,\varepsilon)\leq R_{k,\beta}(s_1,\ldots,s_{k-1},N)\leq \frac{1}{\varepsilon^{k-1}}I^+(N,\varepsilon),
    \]
    and thus we have shown that $\lim_{N\to\infty}R_{k,\beta}(s_1,\ldots,s_{k-1},N)=\prod_{i=1}^{k-1}2s_i$, which is assertion (i).

    \end{proof}
    We now have all the ingredients necessary for proving Theorem \ref{thm:poiss_corr_eq_def_fun_G_H}.
    \begin{proof}[Proof of Theorem~\ref{thm:poiss_corr_eq_def_fun_G_H}]
     We can rewrite the functions \eqref{eq:def_G_beta} and \eqref{eq:def_H_beta} as
    \[
    G_\beta(s_1,\ldots,s_{k-1},t_1,\ldots,t_{k-1},N)=\frac{1}{N^{k-1-(k-1)\beta}}\sum_{\substack{1\leq i_1,\ldots,i_{k-1}\leq N\\\mathrm{distinct}}}\prod_{l=1}^{k-1}\mathds{1}_{B\left(x_{i_l},\frac{s_l}{2N^\beta}\right)}(t_l)
    \]
    and     
    \[
    H_\beta(s_1,\ldots,s_{k-1},t_1,\ldots,t_{k-1},N)=\frac{1}{N^{1-(k-1)\beta}}\sum_{i=1}^N\prod_{l=1}^{k-1}\mathds{1}_{B\left(x_i,\frac{s_l}{2N^\beta}\right)}(t_l).
    \]
    Therefore,
    \begin{align*}
    G_\beta&(s_1,\ldots,s_{k-1},t_1,\ldots,t_{k-1},N)\cdot H_\beta(s_1,\ldots,s_{k-1},t_1,\ldots,t_{k-1},N)\\
    &=\frac{1}{N^{k-2(k-1)\beta}}\sum_{\substack{i_1,\ldots,i_{k}\leq N\\\mathrm{distinct}}}\prod_{l=1}^{k-1}\mathds{1}_{B\left(x_{i_l},\frac{s_l}{2N^\beta}\right)}(t_l)\mathds{1}_{B\left(x_{i_k},\frac{s_l}{2N^\beta}\right)}(t_l)\\&+\frac{1}{N^{k-2(k-1)\beta}}\sum_{\substack{i_1,\ldots,i_{k-1}\leq N\\\mathrm{distinct}}}\sum_{j=1}^{k-1}\prod_{l=1}^{k-1}\mathds{1}_{B\left(x_{i_j},\frac{s_l}{2N^\beta}\right)}(t_l)\mathds{1}_{B\left(x_{i_l},\frac{s_l}{2N^\beta}\right)}(t_l).
    \end{align*}
    It follows that
    \begin{align*}
        \int_0^1\ldots\int_0^1 & G_\beta(s_1,\ldots,s_{k-1},t_1,\ldots,t_{k-1},N)\cdot H_\beta(s_1,\ldots,s_{k-1},t_1,\ldots,t_{k-1},N)\mathrm{d}t_{k-1}\ldots\mathrm{d}t_1\\
        &=\frac{1}{N^{k-2(k-1)\beta}}\sum_{\substack{1\leq i_1,\ldots,i_{k}\leq N\\\mathrm{distinct}}}\prod_{l=1}^{k-1}\leb\left({B\left(x_{i_l},\frac{s_l}{2N^\beta}\right)}\cap{B\left(x_{i_k},\frac{s_l}{2N^\beta}\right)}\right)\\&+\frac{1}{N^{k-2(k-1)\beta}}\sum_{\substack{1\leq i_1,\ldots,i_{k-1\leq N}\\\mathrm{distinct}}}\sum_{j=1}^{k-1}\prod_{l=1}^{k-1}\leb\left({B\left(x_{i_j},\frac{s_l}{2N^\beta}\right)}\cap{B\left(x_{i_l},\frac{s_l}{2N^\beta}\right)}\right)\\
        &=\frac{1}{N^{k-2(k-1)\beta}}\sum_{\substack{1\leq i_1,\ldots,i_{k}\leq N\\\mathrm{distinct}}}\prod_{l=1}^{k-1}\left\{\frac{s_l}{N^\beta}-\left\Vert x_{i_l}-x_{i_k}\right\Vert\right\}^+\\&+\frac{1}{N^{k-2(k-1)\beta}}\sum_{\substack{1\leq i_1,\ldots,i_{k-1}\leq N\\\mathrm{distinct}}}\sum_{j=1}^{k-1}\frac{s_j}{N^\beta}\prod_{\substack{l=1\\j\neq l}}^{k-1}\left\{\frac{s_l}{N^\beta}-\left\Vert x_j-x_{i_l}\right\Vert\right\}^+\\
        &=\frac{1}{N^{k-(k-1)\beta}}\sum_{\substack{1\leq i_1,\ldots,i_{k}\leq N\\\mathrm{distinct}}}\prod_{l=1}^{k-1}s_l\left\{1-\frac{N^\beta\left\Vert x_{i_l}-x_{i_k}\right\Vert}{s_l}\right\}^+\\&+\sum_{j=1}^{k-1}s_j\frac{1}{N^{k-(k-1)\beta}}\sum_{\substack{1\leq i_1,\ldots,i_{k-1}\leq N\\\mathrm{distinct}}}\prod_{\substack{l=1\\j\neq l}}^{k-1}s_l\left\{1-\frac{N^\beta\left\Vert x_j-x_{i_l}\right\Vert}{s_l}\right\}^+.
    \end{align*} 
    Moreover we can write 
    \[
    R_{k,\beta}(\sigma_1,\ldots,\sigma_{k-1},N)=\frac{1}{N^{k-(k-1)\beta}}\sum_{\substack{1\leq i_1,\ldots,i_k\leq N\\\mathrm{distinct}}}\prod_{l=1}^{k-1}\mathds{1}_{[N^\beta\Vert x_{i_l}-x_{i_k}\Vert,\infty)}(\sigma_l)
    \]
    which gives us
    \begin{align*}
       \int_0^{s_1}\ldots\int_0^{s_{k-1}} & R_{k,\beta}(\sigma_1,\ldots,\sigma_{k-1},N)\mathrm{d}\sigma_{k-1}\ldots\mathrm{d}\sigma_{1}\\
       &=\int_0^{s_1}\ldots\int_0^{s_{k-1}}\frac{1}{N^{k-(k-1)\beta}}\sum_{\substack{1\leq i_1,\ldots,i_k\leq N\\\mathrm{distinct}}}\prod_{l=1}^{k-1}\mathds{1}_{[N^\beta\Vert x_{i_l}-x_{i_k}\Vert,\infty)}(\sigma_l)\mathrm{d}\sigma_{k-1}\ldots\mathrm{d}\sigma_{1}\\
       &=\frac{1}{N^{k-(k-1)\beta}}\sum_{\substack{1\leq i_1,\ldots,i_k\leq N\\\mathrm{distinct}}}\prod_{l=1}^{k-1}\int_0^{s_l}\mathds{1}_{[N^\beta\Vert x_{i_l}-x_{i_k}\Vert,\infty)}(\sigma_l)\mathrm{d}\sigma_{l}\\
       &=\frac{1}{N^{k-(k-1)\beta}}\sum_{\substack{1\leq i_1,\ldots,i_k\leq N\\\mathrm{distinct}}}\prod_{l=1}^{k-1}\left\{s_l-{N^\beta\left\Vert x_{i_l}-x_{i_k}\right\Vert}\right\}^+\\
       &=\frac{1}{N^{k-(k-1)\beta}}\sum_{\substack{1\leq i_1,\ldots,i_k\leq N\\\mathrm{distinct}}}\prod_{l=1}^{k-1}s_l\left\{1-\frac{N^\beta\left\Vert x_{i_l}-x_{i_k}\right\Vert}{s_l}\right\}^+.
    \end{align*}
    Notice that if $k=2$, we obtain
    \begin{align*}
    \int_0^1&G_\beta(s_1,t_1,N)\cdot H_\beta(s_1,t_1,N)\mathrm{d}t_1 \\&=\frac{1}{N^{2-\beta}}\sum_{\substack{1\leq i_1,i_{2}\leq N\\\mathrm{distinct}}}s_1\left\{1-\frac{N^\beta\left\Vert x_{i_1}-x_{i_2}\right\Vert}{s_1}\right\}^+ +\frac{s_1}{N^{1-\beta}}
    \end{align*}
    and if $k\geq 3$, 
    \begin{align*}
    \sum_{j=1}^{k-1}&s_j\frac{1}{N^{k-(k-1)\beta}}\sum_{\substack{1\leq i_1,\ldots,i_{k-1\leq N}\\\mathrm{distinct}}}\prod_{\substack{l=1\\j\neq l}}^{k-1}s_l\left\{1-\frac{N^\beta\left\Vert x_j-x_{i_l}\right\Vert}{s_l}\right\}^+\\ & =\frac{1}{N^{1-\beta}}\sum_{j=1}^{k-1}s_j \int_0^{s_1}\ldots\int_0^{s_{k-1}} R_{k,\beta}(\sigma_1,\ldots,\hat{\sigma_j},\ldots\sigma_{k-1},N)\mathrm{d}\sigma_{k-1}\ldots\mathrm{d}\sigma_{1}.
    \end{align*}

    By using Proposition \ref{prop:poiss_corr_eq_def_fun_G_H}, it is immediate that (i) and (ii) are equivalent.
\end{proof}

In \cite{hauke:weak_poiss_corr} the corresponding result was used to show that Poissonian correlations with $\beta_1 > 0$ imply Poissonian pair correlations for all $\beta_2 < \beta_1$. We believe that Proposition~\ref{prop:poiss_corr_eq_def_fun_G_H} can be used to show this for $(k,\beta)$-Poissonian box correlations as well. Unfortunately neither the proof therein transfers to the more general setting nor did we find a different rigorous argument for it. Therefore we can only state a conjecture here.
\begin{conjecture}\label{conj:poiss_k_stronger_impl_weaker}
    Let $0<\beta_1<\beta_2\leq 1$ and let $k\geq 2$. If a sequence $x=\mathseq{x_n}{n\in\N}$ of real numbers has $(k,\beta_2)$-Poissonian box correlations, then it also has $(k,\beta_1)$-Poissonian box correlations.
\end{conjecture}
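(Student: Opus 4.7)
The plan is to reduce the conjecture to the integral characterization of Proposition~\ref{prop:poiss_corr_eq_def_fun_G_H}. By that proposition, it is enough to establish
\[
\lim_{N\to\infty}\int_0^{s_1}\cdots\int_0^{s_{k-1}} R_{k,\beta_1}(\sigma_1,\ldots,\sigma_{k-1},N)\,\mathrm{d}\sigma_{k-1}\cdots\mathrm{d}\sigma_1=\prod_{i=1}^{k-1}s_i^2
\]
for every $s_1,\ldots,s_{k-1}>0$, given the analogous identity at $\beta=\beta_2$. Since the count defining $R_{k,\beta}$ depends on $\beta$ only through the ratios $s_l/N^\beta$, the substitution $\tau_l = N^{\beta_2-\beta_1}\sigma_l$ yields the exact scaling identity
\[
R_{k,\beta_1}(\sigma,N)=N^{-(k-1)(\beta_2-\beta_1)}R_{k,\beta_2}(N^{\beta_2-\beta_1}\sigma,N),
\]
so after changing variables the task reduces to determining the asymptotics of the integral of $R_{k,\beta_2}(\tau,N)$ over the cube $[0,s_1 N^{\beta_2-\beta_1}]\times\cdots\times[0,s_{k-1}N^{\beta_2-\beta_1}]$ as $N\to\infty$, with upper limits that grow polynomially in $N$.

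The second step is to control this growing-scale integral by partitioning the enlarged cube into sub-boxes of fixed side lengths and applying Proposition~\ref{prop:poiss_corr_eq_def_fun_G_H} on each piece together with the monotonicity of $R_{k,\beta_2}$ in its first $k-1$ parameters, in the spirit of the sandwich argument carried out in the proof of that proposition. Summing the resulting contributions over a grid of $m^{k-1}$ sub-cubes tiling $[0,s_1 N^{\beta_2-\beta_1}]\times\cdots\times[0,s_{k-1}N^{\beta_2-\beta_1}]$ would heuristically reconstruct the target value $N^{2(k-1)(\beta_2-\beta_1)}\prod_{i}s_i^2$.

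The main obstacle is that the hypothesis supplies only \emph{pointwise} convergence $R_{k,\beta_2}(\tau,N)\to\prod_{l}2\tau_l$ for each \emph{fixed} $\tau$, whereas the proposed sandwich requires this convergence to hold uniformly over a grid whose mesh is bounded but whose range grows like $N^{\beta_2-\beta_1}$. No such uniformity is implied by the abstract definition of $(k,\beta_2)$-Poissonian box correlations. For $k=2$ this obstruction is bypassed in \cite{hauke:weak_poiss_corr} by exploiting the coincidence $G_\beta=H_\beta$ and applying Cauchy--Schwarz to $\int G_\beta^2\,\mathrm{d}t$, which automatically supplies a matching lower bound with no need for scale-uniform input. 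For $k\geq 3$ the functions $G_\beta$ and $H_\beta$ in Theorem~\ref{thm:poiss_corr_eq_def_fun_G_H} are genuinely different, as reflected by the recursive correction $\varphi$ in the statement, and no clean symmetric substitute is available to us. A more ambitious route would decompose each ball $B(t_l,s_l/(2N^{\beta_1}))$ into roughly $N^{\beta_2-\beta_1}$ disjoint sub-balls of radius $s_l/(2N^{\beta_2})$, expand $G_{\beta_1}\cdot H_{\beta_1}$ along this decomposition, and then integrate; however the $H_\beta$ factor demands a \emph{single} point to lie in all $k-1$ small balls simultaneously, so the cross-terms produced by the decomposition do not factor combinatorially and the Cauchy--Schwarz shortcut from the $k=2$ case does not transfer.
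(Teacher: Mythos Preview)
Your proposal is not a proof, and you are explicit about this: you outline a natural strategy via Proposition~\ref{prop:poiss_corr_eq_def_fun_G_H}, identify the scaling identity $R_{k,\beta_1}(\sigma,N)=N^{-(k-1)(\beta_2-\beta_1)}R_{k,\beta_2}(N^{\beta_2-\beta_1}\sigma,N)$, and then correctly isolate the obstruction, namely that only pointwise convergence of $R_{k,\beta_2}$ is available while the argument would need uniformity over scales growing like $N^{\beta_2-\beta_1}$, and that the $k=2$ Cauchy--Schwarz shortcut from \cite{hauke:weak_poiss_corr} relies on the coincidence $G_\beta=H_\beta$, which fails for $k\geq 3$.

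This matches the paper exactly in outcome: the statement is recorded as Conjecture~\ref{conj:poiss_k_stronger_impl_weaker} and is \emph{not} proved. The authors say precisely that they believe Proposition~\ref{prop:poiss_corr_eq_def_fun_G_H} should be the right tool, that the proof from \cite{hauke:weak_poiss_corr} does not transfer to $k\geq 3$, and that they were unable to find an alternative rigorous argument. Your diagnosis of why the $k=2$ argument breaks down (the asymmetry between $G_\beta$ and $H_\beta$, reflected in the recursive term $\varphi$ in Theorem~\ref{thm:poiss_corr_eq_def_fun_G_H}) is more detailed than what the paper spells out, but it is entirely consistent with their remarks. In short: there is no proof in the paper to compare against, and your write-up is an honest account of the same gap the authors acknowledge.
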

Let us finish this section by showing the proof of Theorem \ref{thm:poiss_k_corr_and_discrepancy}, which yields an existence theorem for $(k,\beta)$-Poissonian correlations. We introduce the notation
\begin{align}\label{eq:def_F_t_s_N}
\begin{split}    
F_{\beta,k}&(t,s_1,\ldots,s_{k-1},N)\\&:=\card{\mathset{\substack{1\leq i_1,\ldots,i_{k-1}\leq N\\\mathrm{distinct}}}{\left\Vert t-x_{i_1}\right\Vert\leq\frac{s_1}{N^\beta},\ldots,\left\Vert t-x_{i_{k-1}}\right\Vert\leq\frac{s_{k-1}}{N^\beta}}}.
\end{split}
\end{align}

\begin{proof}[Proof of Theorem \ref{thm:poiss_k_corr_and_discrepancy}]
By the assumption on the discrepancy we know that for any $x\in\R^d$ and any $s>0$, the number of points closer to $x$ than $\frac{s}{N^\beta}$ is $N\frac{2s}{N^{\beta}}+o(N^\varepsilon)$. Recall from \eqref{eq:def_F_t_s_N} the definition of $F_{\beta,k}$. Thus,
\begin{align*}
    R_{k,\beta}(s_1,\ldots,s_{k-1},N)&=\frac{1}{N^{k-(k-1)\beta}}\sum_{i=1}^NF_{\beta,k}(x_i,s_1,\ldots,s_{k-1},N)\\
    &=\frac{1}{N^{k-(k-1)\beta}}N\prod_{j=1}^{k-1}\left(N\frac{2s_j}{N^{\beta}}+o(N^\varepsilon)\right)\\
    &=\prod_{j=1}^{k-1}2s_j+o(N^{-(1-\beta)+\varepsilon}).
\end{align*}

Hence it follows that
\[
\lim_{N\to\infty}R_{k,\beta}(s_1,\ldots,s_{k-1},N)=
    \prod_{j=1}^{k-1}2s_j
\]
and the proof is done.
\end{proof}

\section{Gap structure and $(k,1)$-Poissonian correlations}\label{sec:gap_structure}
We have seen that in Theorem~\ref{thm:poiss_k_corr_and_discrepancy} that low-discrepancy sequences, i.e. sequences with order of discrepancy $O(\log(N)/N)$, compare \cite{Nie92}, are typical examples of sequences with $(k,\beta)$-Poissonian correlations for all $0 < \beta < 1$. In this section we will draw our attention to the case of $(k,1)$-Poissonian pair correlations and analyze in detail how the gap structure of a sequence influences the $(k,1)$-pair correlation statistic. In particular, we will see that the finite gap length property constitutes an obstacle for having Poissonian $(k,1)$-pair correlations. Recall from the introduction, Section \ref{sec:intro}, that given an ordered set of numbers $x_1\leq x_2\leq\ldots\leq x_N \in [0,1)$, a gap is one of the distances $\Vert x_2-x_1\Vert,\ldots,\Vert x_{N}-x_{N-1}\Vert,\Vert x_1-x_N\Vert$. Before we prove Theorem~\ref{thm:gap_strucure_poiss_k_corr}, we shortly mention two important examples of (low-discrepancy) sequences which have a finite number of different gap lengths.
\begin{example}
    Let $\alpha\in\R\setminus\Q$, then the sequence
	\[
	x_n=\{n\alpha\}
	\]
	is called a Kronecker sequence. A Kronecker sequence has at most three gaps. The situation is also illustrated in Figure~\ref{fig:kronecker}. This result is known as the famous Three Gap Theorem and has originally been proved in \cite{Sos58}.  
\end{example}
\begin{figure}[!ht]
    \centering
    \includegraphics[scale=0.4]{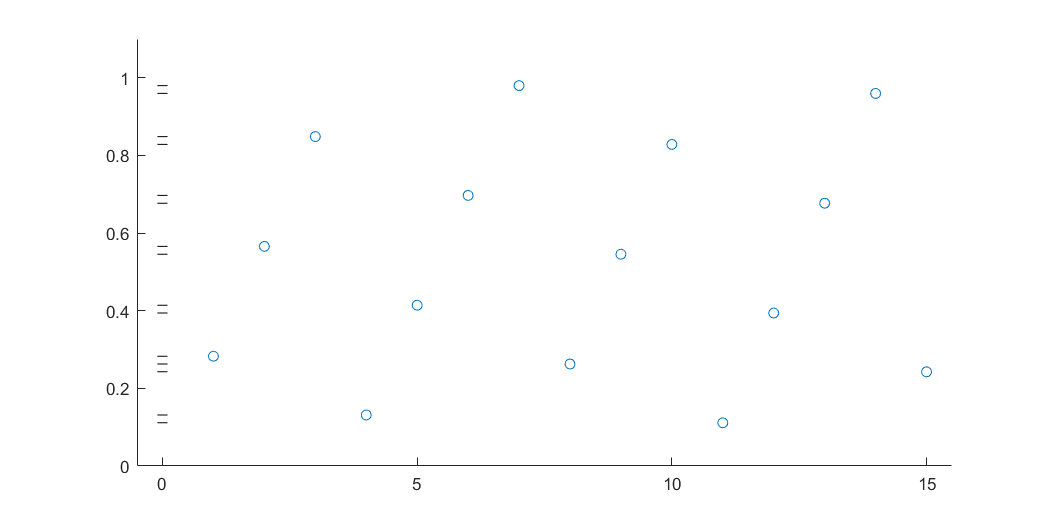}
    \caption{The Kronecker sequence for $\alpha=\frac15\sqrt{2}$. The gap structure can be observed on the left.}
    \label{fig:kronecker}
\end{figure}
\begin{example}
    If $b$ is prime, any number $n\in\N$ can uniquely be represented as $n=a_0+a_1b+a_2b^2+\ldots + a_kb^k$ with $0\leq a_i<b$. Define $g_b(n)=a_0b^{-1}+a_1b^{-2}+\ldots + a_k b^{-(k+1)}$. Then the sequence 
	\[
	x_n=\begin{cases}
	    0&\text{if } n=0\\
        g_b(n)&\text{if } n\geq 1
	\end{cases}
	\]
	is called van der Corput sequence in base $b$. A van der Corput sequence has at most three different gap lengths. If $b=2$, it has at most two gap lengths. This can easily be seen: if $2\leq n<b-1$, then there are exactly two gap lengths. If $n=b^m-1$ for some $m\in\N$, then there is exactly one gap lengths. Adding another element adds two different gap lengths, one between the new element and $0$ and another between the new element and the next elements (these two coincide for $b=2$). The following elements up until $n=b^{m+1}-1$ do not add additional gap lengths.  Often, the element $x_0=0$ is omitted. In this case, the van der Corput sequence can have up to four different gap lengths.
\end{example}
A collection of further examples can e.g. be found in \cite{larcher:som_neg_results_poiss_pair_corr}. To facilitate the proof of Theorem \ref{thm:gap_strucure_poiss_k_corr} we introduce the notation of $\delta$-covers, see e.g. \cite{GPW21}.
\begin{definition}Let $\delta>0$ and $\mathcal{A}\subseteq\mathcal{P}([0,1]^{d})$, a subset of the power set of $[0,1]^d$. A $\delta$-cover for $\mathcal{A}$ is a set $\Gamma\subseteq \mathcal{A}$ such that for every $A\in\mathcal{A}$ there exist $U,V\in \Gamma\cup\{\emptyset\}$ satisfying $U\subseteq A\subseteq V$ and
\[
\leb(V\setminus U)\leq \delta.
\]
\end{definition}

We will specifically use $\delta$-covers for the family
\[
\mathcal{C}(X):=\mathset{[0,x)}{x\in X},
\]
where $X\subseteq \R^d$. Now we have all tools at hand to prove Theorem~\ref{thm:gap_strucure_poiss_k_corr} in a rather compact way.

\begin{proof}[Proof of Theorem~\ref{thm:gap_strucure_poiss_k_corr}]
The sequence $\mathseq{x_n}{n\in\N}$ contains a subsequence $\mathseq{x_{N_i}}{i\in\N}$, such that every $x_1,\ldots,x_{N_i}$ has exactly $Z$ distinct distances between neighboring elements (pigeonhole principle). Let us call these distances $d_1^i,\ldots,d_Z^i$ and order them $d_1^i<d_2^i<\ldots<d_Z^i$. \\[12pt]
We will also without loss of generality assume that the set $x_{1},\ldots,x_{N_i}$ is ordered, meaning that gaps are always between consecutive elements.\\[12pt]
By moving to subsequences if necessary, we may furthermore without loss of generality assume that every $\mathseq{d_j^i}{i\in\N}$ with $1\leq j\leq Z$ falls into one of three categories:
\begin{itemize}
    \item $\lim_{i\to\infty}N_id_j^i=\infty$, which we call large gaps.
    \item $d_j^i=0$ for all $i\in\N$. We call these gaps zero gaps.
    \item There does not exist a subsequence of large gaps and no $d_j^i$ equals zero. In this case, we speak of medium gaps. Note that this implies that there are constants $0<U_j<\infty$ such that $0< N_i d_j^i\leq U_j$ for all $i\in\N$.
\end{itemize}
From now on, we assume that the sequence has $(k,1)$-Poissonian box correlations for some $k>0$, aiming for a contradiction. For every $\vec{n}=(n_1,\ldots,n_{k-1})\in[Z]^{k-1}$, let 
\[
l_{\vec{n}}^i=\card{\mathset{1\leq i_0\leq N_{i_0} \, }{ \, \Vert x_{i_0}-x_{i_0+1}\Vert=d_{n_1}^i,\ldots,\Vert x_{i_0}-x_{i_0+k}\Vert=d_{n_{k-1}}^i}}.
\]
By definition
\begin{equation}\label{eq:sum_vec_n_eq_M_i}
\sum_{\vec{n}\in[Z]^{k-1}}l_{\vec{n}}^i=N_i-k+1
\end{equation}
is true for all $i\in\N$. We will now show that for every $\vec{n}\in[Z]^{k-1}$ without a zero gap $\lim_{i\to\infty}\frac{l_{\vec{n}}^i}{N_i}=0$ holds, i.e. those gaps contribute to the sum in \eqref{eq:sum_vec_n_eq_M_i} in a negligible way. Afterwards we will use this and \eqref{eq:sum_vec_n_eq_M_i} to arrive at a contradiction in the case when $\vec{n}$ represents a sequence of gaps including a zero gaps.\\[12pt]
First, we take a look at the case where there is at least one $j\in\vec{n}$ such that $d_j^i$ is a large gap. Interestingly, for this case the $(k,\beta)$ Poissonian correlations property is irrelevant. Notice that 
\[
l_{\vec{n}}^i\leq\card{\mathset{1\leq i_0\leq N_i-1}{\Vert x_{i_0}-x_{i_0+1}\Vert=d_{j}^i}}\leq\frac{1}{d_j^i}.
\]
It follows that
\[
    \lim_{i\to\infty}\frac{l_{\vec{n}}^i}{N_i}\leq\lim_{i\to\infty}\frac{1}{d_j^iN_i}=0.
\]
Now, let us take a look at the case where all gaps are medium gaps.  We will once again show that $\lim_{i\to\infty}\frac{l_{\vec{n}}^i}{N_i}=0$.
Assume to the contrary that there is some $\delta>0$ such that $ l_{\vec{n}}^i\geq \delta N_i$ for infinitely many $i \in \mathbb{N}$. 
 Since the gaps are medium, there are constants $0<U_j<\infty$ such that $0\leq d_{n_j}^i N_i\leq U_j$ for all $i\in\N$ and $1\leq j<Z$. 
Define $\hat{d}_j^i=\sum_{r=1}^j d_{n_r}^i$ and $\hat{U}_{j}= \sum_{r=1}^j U_{n_r}$, then $0\leq \hat{d}_j^i N_i\leq \hat{U}_j$ for all $i\in\N, 1\leq j<Z$. Take $\Gamma$ to be a $\delta/2^k$-Cover of $\mathcal{C}(\prod_{j=1}^{k-1}[0,\hat{U}_j])$. Then there are $u,t\in\Gamma\cup\{0\}$ with $u<t$, $\leb([0,t)\setminus[0,u))\leq \delta/2^k$ and
\begin{equation}\label{eq:u_low_d_leq_t}
 \frac{u_j}{N_i}< \hat{d}_{j}^i\leq\frac{ t_j}{N_i}   
\end{equation}
for infinitely many $i\in\N$ and all $1\leq j<Z$. By moving to a subsequence, we may assume that this is true for all $i$.  \\[12pt]
For every $i_0\leq N_i$ with 
\[
\Vert x_{i_0}-x_{i_0+1}\Vert=d_{n_1},\ldots,\Vert x_{i_{0}+k-1}-x_{i_0+k}\Vert=d_{n_{k-1}}
\]
we have 
\[
\Vert x_{i_0}-x_{i_0+1}\Vert=\hat{d}_{n_1},\ldots,\Vert x_{i_{0}}-x_{i_0+k}\Vert=\hat{d}_{n_{k-1}}.
\]
By \eqref{eq:u_low_d_leq_t} this implies
$$(x_{i_0},x_{i_0+1},\ldots,x_{i_0+k})\in R_{k,1}(t_{n_{1}},\ldots,t_{n_{k-1}},M_i)$$ and 
$$(x_{i_0},x_{i_0+1},\ldots,x_{i_0+k})\notin R_{k,1}(u_{n_{1}},\ldots,u_{n_{k-1}},M_i).$$ 
Since there are at least $ l_{\vec{n}}^i$ tuples of elements having distances $d_{n_1}^i,\ldots,d_{n_{k-1}}^i$, this now gives us
\begin{align*}
    R_{k,1}(t_{n_{1}},\ldots,t_{n_{k-1}},M_i)
    &\geq R_{k,1}(u_{n_1},\ldots,u_{n_{k-1}},M_i)+l_{\vec{n}}^i.
\end{align*}
As the sequence has $(k,1)$-Poissonian box correlations by assumption, this implies
\begin{align*}
    \prod_{j\in\vec{n}}2t_{j}-\prod_{j\in\vec{n}}2u_{j}
    &=\lim_{i\to\infty} \frac{1}{N_i}R_{k,1}\left(t_{n_{1}},\ldots,t_{n_{k-1}}, N_i\right)-\frac{1}{N_i}R_{k,1}(u_{n_1},\ldots,u_{n_{k-1}}, N_i)\\
    &\geq\limsup_{i\to\infty}\frac{l_{\vec{n}}^i}{N_i}\geq \delta.
\end{align*}
But we chose our points in such a way that 
\[
\prod_{j\in\vec{n}}2t_{j}-\prod_{j\in\vec{n}}2u_{j}\leq 2^{k-1}\frac{\delta}{2^k}<\delta,
\]
which is a contradiction. Therefore,  $\lim_{i\to\infty}\frac{l_{\vec{n}}^i}{N_i}=0$.\\[12pt]
Finally we come to the case where there exists a zero gap. Since the $d_1^i<d_2^i\ldots<d_Z^i$ are ordered by size, $\vec{n}$ represents a zero gap if and only if $1\in\vec{n}$. From what we have shown for large and medium gaps, using \eqref{eq:sum_vec_n_eq_M_i}, we conclude that 
\begin{align*}
 \lim_{i\to\infty} \sum_{\substack{\vec{n}\in[Z]^{k-1}\\1\in\vec{n}}}\frac{l_{\vec{n}}^i}{N_i}=1.
\end{align*}
Therefore,
\begin{align*} 
  \frac{1}{N_i}\card{\mathset{1\leq i_0\leq N_i-1\,}{\,x_{i_0}=x_{i_0+1}}} =\sum_{\substack{\vec{n}\in[Z]^{k-1}\\1\in\vec{n}}}\frac{l_{\vec{n}}^i}{N_i}\stackrel{i\to\infty}{\longrightarrow}1.
\end{align*}
Setting $Q_i:=\card{\mathset{1\leq i_0\leq N_i-1}{x_{i_0}=x_{i_0+1}}}$ it follows that $\lim_{i\to\infty}Q_i=\infty$. Thus
\begin{align*}
    0&=\lim_{i\to\infty}\frac{1}{N_i}R_{k,1}(0,\ldots,0,N_i) = \infty.
\end{align*}
This contradiction finishes the proof.
\end{proof}
Notice that, since $(k,\beta)$-Poissonian correlations imply $(k,\beta)$-Poissonian box correlations for all $0 < \beta \leq 1$, any sequence satisfying the conditions of Theorem \ref{thm:gap_strucure_poiss_k_corr} also does not have $(k,1)$-Poissonian correlations.

\section{Future research} \label{sec:future}
There are several question left open by this paper which may be studied in the future. The first question was stated as Conjecture~\ref{conj:poiss_k_stronger_impl_weaker}, i.e. if $(k,\beta_2)$-Poissonian box correlations imply $(k,\beta_1)$-Poissonian box correlations for $\beta_2<\beta_1$? The second question is whether or not there is a difference between the concepts of $(k,\beta)$-Poissonian correlations and $(k,\beta)$-Poissonian box correlations. If those two are not equivalent, it would be interesting to study examples of sequences which satisfy the latter, but not the former definition. Furthermore one might wonder if there are specific conditions on the parameters $k$ and $\beta$ such the result from Theorem \ref{thm:gap_strucure_poiss_k_corr} still holds or how counter examples can be constructed. Finally it would be interesting to find meaningful generalizations to higher dimensions, similar to \cite{NP07}, \cite{hinrichs:mult_dim_poiss_pair} or \cite{chaubey:pair_corr_real_val_vector}, and to investigate what properties these have.

\bibliographystyle{alpha}
\bibdata{references}
\bibliography{references}
\end{document}